\let\pa\partial
\let\na\nabla
\let\eps\varepsilon
\newcommand{\N}{{\mathbb N}}
\newcommand{\R}{{\mathbb R}}
\newcommand{\Z}{{\mathbb Z}}
\newcommand{\diver}{\operatorname{div}}
\newcommand{\dd}{{\mathrm d}}
\newcommand{\Pb}{\mathbb{P}}
\newcommand{\E}{\mathbb{E}}
\newcommand{\K}{{\mathcal K}}
\renewcommand{\L}{{\mathcal L}}
\newtheorem{theorem}{Theorem}
\newtheorem{lemma}[theorem]{Lemma}
\newtheorem{proposition}[theorem]{Proposition}
\newtheorem{remark}[theorem]{Remark}
\newtheorem{corollary}[theorem]{Corollary}
\newtheorem{definition}{Definition}
\begin{document}

\title[Discretization schemes for Fokker--Planck equations]{Long-time behavior for discretization schemes of Fokker--Planck equations via couplings}

\author[A. J\"ungel]{Ansgar J\"ungel}
\address{Institute of Analysis and Scientific Computing, Vienna University of Technology, Wiedner Hauptstra\ss e 8--10, 1040 Wien, Austria}
\email{juengel@tuwien.ac.at}

\author[K. Schuh]{Katharina Schuh}
\address{Institute of Analysis and Scientific Computing, Vienna University of Technology, Wiedner Hauptstra\ss e 8--10, 1040 Wien, Austria}
\email{katharina.schuh@tuwien.ac.at} 

\date{\today}

\thanks{The authors acknowledge partial support from   
the Austrian Science Fund (FWF), grant 10.55776/F65, and from the Austrian Federal Ministry for Women, Science and Research and implemented by \"OAD, project MultHeFlo. This work has received funding from the European Research Council (ERC) under the European Union's Horizon 2020 research and innovation programme, ERC Advanced Grant NEUROMORPH, no.~101018153. For open-access purposes, the authors have applied a CC BY public copyright license to any author-accepted manuscript version arising from this submission.} 

\begin{abstract}
Continuous-time Markov chains associated to finite-volume discretization schemes of Fokker--Planck equations are constructed. Sufficient conditions under which quantitative exponential decay in the $\phi$-entropy and Wasserstein distance are established, implying modified logarithmic Sobolev, Poincar\'e, and discrete Beckner inequalities. The results are not restricted to additive potentials and do not make use of discrete Bochner-type identities. The proof for the $\phi$-decay relies on a coupling technique due to Conforti, while the proof for the Wasserstein distance uses the path coupling method. Furthermore, exponential equilibration for discrete-time Markov chains is proved, based on an abstract discrete Bakry--Emery method and a path coupling.
\end{abstract}

\keywords{Continuous-time Markov chains, Fokker--Planck equations, finite-volume method, coupling method, discrete-time Markov chains.}  
 
\subjclass[2000]{60J10, 60J27, 65C40, 65J08, 65M08.}

\maketitle


\section{Introduction}

We investigate finite-volume discretizations for Fokker--Planck equations, which preserve the exponential decay of the continuous equation with respect to the entropy and Wasserstein distance. Exponential equilibration is often proved by means of the Bakry--Emery method \cite{BaEm85}. The computations needed to apply this method use the chain rule, which is not easily available on the discrete level. Possible approaches, which avoid any discrete chain rule, are discrete Bochner-type inequalities \cite{CPP09} or nonlinear summation-by-parts formulas \cite{JuSc17}. Recently, Conforti suggested in \cite{Con22} a new approach that combines the Bakry--Emery method and coupling arguments as a probabilistic alternative to the discrete Bochner identities. In this paper, we extend his approach to Markov chains arising from finite-volume schemes for Fokker--Planck equations. An advantage of this approach is that we can allow for non-additive potentials, which were needed in, e.g., \cite{MaMa16}.

\subsection{The setting}

We are interested in the long-time behavior of finite-volume discretizations on rectangular grids of the Fokker--Planck equation
\begin{equation}\label{1.FP}
  \pa_t u = \sigma^2\Delta u + \diver(u\na V)
\end{equation}
on the cube $D=[-K,K]^d$ ($K>0$) with homogeneous Neumann boundary conditions and the initial condition $u(0,x)=u_0(x)$ for $x\in D$. The diffusion constant $\sigma>0$ is positive and the potential $V=V(x)$ is assumed to be smooth and strongly convex (see below). Equation \eqref{1.FP} is the analytic counterpart of the stochastic differential equation (SDE) 
\begin{equation}\label{1.sde}
  \dd X_t = -\na V(X_t)\dd t + \sqrt{2\sigma^2}\dd B_t
  + \sum_{j=1}^d e_j(\dd L^{j+}-\dd L^{j-})\quad\mbox{on }D
\end{equation} 
with the initial condition $X_0$ with law $u_0$, $e_j$ are the Euclidean basis vectors, and the local time $L^{j\pm}$ is a continuous, nondecreasing process such that $L_0^{j\pm}=0$, which increases only when the process $(X_t)_{t\ge 0}$ is in the half-spaces associated to $D$, i.e.\ $L_t^{j\pm}=\int_0^t\mathrm{1}_{\{X_s\in\pa D_j^\pm\}}\dd L_s^{j\pm}$, where $\pa D_j^\pm=\{x\in D:\langle x\mp K e_j,e_j\rangle = 0\}$ and $\langle\cdot,\cdot\rangle$ is the standard inner product on $\R^d$. The existence and uniqueness of a strong solution to \eqref{1.sde} holds by \cite{Ta79}. The terms involving $L^{j\pm}_t$ ensure that the solution to the SDE is reflected at the boundary $\pa D$. 

The probability density of the solution to the SDE \eqref{1.sde} solves the Fokker--Planck equation \eqref{1.FP}, and the corresponding generator to \eqref{1.sde} is of the form
\begin{equation}\label{1.L}
  \L f(x) = \sigma^2\Delta f(x) - \na V(x)\cdot\na f(x)\quad\mbox{for }
  f\in\mathcal{A},\ x\in D,
\end{equation}
where the domain of $\L$ is
\begin{equation}\label{1.A}
  \mathcal{A} = \{f\in C^2_0(\R^d;\R):\pa_j f(x)=0\quad\mbox{for }
  x\in\pa D_j^\pm,\ j=1,\ldots,d\}.
\end{equation}
Note that its adjoint $\L^*$ is given by the right-hand side of \eqref{1.FP}.  

The finite-volume discretization for \eqref{1.FP} is defined as follows. Let $\K_h$ be the centers of the cells of a regular rectangular grid (see Section \ref{sec.def} for details). We study the semi-discretized equation of \eqref{1.FP},
$$
  \pa_t u(t,i) = \L^*_h u(t,i)\quad\mbox{for }t>0,\ i\in\K_h,
$$
where $\L_h^*$ is a discretization of $\L^*$. As in the continuous case, this operator is the adjoint of some operator $\L_h$ of some stochastic process, namely a continuous-time Markov chain on the state space $\K_h$, and it can be written as
\begin{equation*}
  \L_h f(i) = \sum_{\gamma\in G_i}c(i,\gamma)\big(f(\gamma i)-f(i)\big),
\end{equation*}
where $G_i$ is the set of all moves $\pm_j$ to the neighboring states $i\mapsto i\pm he_j$ (with the grid size $h>0$) and $c(i,\gamma)$ denotes the rate that a jump happens from $i$ in the direction $\gamma$,
\begin{equation}\label{1.c}
  c(i,\pm_j) = \frac{\sigma^2}{h^2}\begin{cases}
  e^{-(V^h(i\pm he_j)-V^h(i))/(2\sigma^2)}
  & \mbox{if }i\pm he_j\in\K_h, \\
  0 & \mbox{else},
  \end{cases}
\end{equation}
where $V^h$ is the mean value of $V$ over the control volume with center $i$,
$$
  V^h(i) = \frac{1}{h^d}\int_{[-h/2,h/2]^d}V(i+s)\dd s.
$$
Because of the homogeneous Neumann boundary condition, the rate function for states at the boundary vanishes for moves going outside of the domain. We discuss the choice \eqref{1.c} in Remark \ref{rem.c}. The Markov chain associated to this scheme has a unique invariant measure $m_h$ on $\K_h$; see Lemma \ref{lem.invmeas}. 

\subsection{Main results and key ideas}

We suppose that
\begin{itemize}
\item[(i)] $\phi$ and $\Phi$, defined by $\Phi(a,b)=(\phi'(a)-\phi'(b))(a-b)$ for $a,b\in\R$, are convex;
\item[(ii)] the potential $V$ is strongly convex in the sense $\langle x-y,\na V(x)-\na V(y)\rangle\ge \kappa|x-y|^2$ for all $x,y\in\R^d$ and some $\kappa>0$;
\item[(iii)] the Hessian of the potential is strictly diagonally dominant in a sense specified in \eqref{3.kappa+}--\eqref{3.kappa-} below. 
\end{itemize}
Condition (i) is satisfied for the power-law entropies $\phi_\alpha(x)=x^\alpha$ with $1<\alpha\le 2$ and the Boltzmann entropy density $\phi_\alpha(x)=x\log x$ with $\alpha=1$. For this family of functions, the exponential decay is closely related to convex Sobolev inequalities, namely the modified logarithmic Sobolev inequality for $\alpha=1$, the Poincar\'e inequality for $\alpha=2$, and Beckner-type inequalities for $1<\alpha<2$. The convexity of $\phi$ is natural in this context, while the convexity of $\Phi$ is needed in Conforti's coupling approach; see Theorem \ref{thm.conforti} below. The strong convexity of the potential in condition (ii) is required to conclude exponential decay. Condition (iii) is satisfied, for instance, for additive potentials being of the form $V(x)=\sum_{j=1}^d V_j(x_j)$ with $x=(x_1,\ldots,x_d)\in\R^d$; see Remark \ref{rem.add}. An example of a non-additive potential satisfying condition (iii) is given by $V(x)=x^TKx$, where $K$ is a symmetric positive definite matrix with smallest eigenvalue $\kappa>0$.

Our main results include the following quantitative decay rates for the continuous-time Markov chain and its discrete-time version (obtained from an explicit Euler scheme) in the relative $\phi$-entropy
\begin{equation*}
  \mathcal{H}^\phi(f|m_h) = \sum_{i\in\K_h}\phi(f(i))m_h(i)
  - \phi\bigg(\sum_{i\in\K_h}f(i)m_h(i)\bigg)
\end{equation*}
and the $L^1$ Wasserstein distance $\mathcal{W}_1$:
\begin{align}
  \mathcal{H}^\phi(S_t f|m_h) 
  \le e^{-\kappa_\phi t}\mathcal{H}^\phi(f|m_h) &\quad\mbox{for }f\ge 0,
  \ t\ge 0, \label{1.res1} \\
  \mathcal{W}_1(\nu p_t,\eta p_t) 
  \le \sqrt{d}e^{-\kappa_1 t}\mathcal{W}_1(\nu,\eta) 
  &\quad\mbox{for }t\ge 0, \label{1.res2}
\end{align}
where $S_t$ denotes the semigroup of the Markov chain, i.e.\ $\pa_t(S_tf)=\L_h(S_tf)$, $\nu$ and $\eta$ are two probability measures, and $(p_t)_{t\ge 0}$ is the transition function of the Markov chain. The constants $\kappa_\phi>0$ and $\kappa_1>0$ depend on the gap of the strictly diagonally dominance for the Hessian of $V$ from condition (iii). 

The first idea of the proof of inequality \eqref{1.res1} is based on the Bakry--Emery method. Indeed, if the convex Sobolev inequality
\begin{equation}\label{1.csi}
  \kappa_\phi\mathcal{H}^\phi(f|m_h) \le \mathcal{E}(\phi'(f),f)
\end{equation}
holds for all functions $f\ge 0$, where $\mathcal{E}(f,g)$ is the Dirichlet form associated to $\L_h$ (see \eqref{2.diri}), a formal computation shows that 
\begin{align*}
  \frac{\dd}{\dd t}\mathcal{H}^\phi(S_tf|m_h)
  &= \sum_{i\in\K_h}\phi'(S_t f(i))\pa_t(S_t f)(i)m_h(i) 
  = \sum_{i\in\K_h}\phi'(S_t f(i))(\L_h S_tf)(i)m_h(i) \\
  &= -\mathcal{E}(\phi'(S_tf),S_tf)
  \le -\kappa_\phi\mathcal{H}^\phi(S_tf|m_h),
\end{align*}
and Gr\"onwall's inequality implies \eqref{1.res1}. To prove the convex Sobolev inequality \eqref{1.csi}, Bakry and Emery \cite{BaEm85} have shown that the second derivative $\dd^2\mathcal{H}^\phi/\dd t^2$ is bounded from below by the Dirichlet form, which implies \eqref{1.csi}. However, this task is very delicate in the discrete setting. 

Therefore, the second idea is to use the coupling method of Conforti \cite{Con22}. Roughly speaking, the idea is to compare two probability measures $\nu$ and $\eta$ and to construct a joint probability spaces with marginals $\nu$ and $\eta$. Conforti proved \eqref{1.res1} for various interacting random walks. We establish \eqref{1.res1} in the context of finite-volume schemes with strongly convex potentials and interpret the value $\kappa_\phi$ within this setting. In the proof, we use a synchronous coupling for the contraction rates with maximal probability. When a synchronous move is not possible, we couple the rates of the neighboring states such that the states after a jump are identical with maximal probability or are at least neighboring states.

Inequality \eqref{1.res2} is proved by using the path coupling method, see \cite{BuDy97}. This coupling approach reduces the computations in the proof only to neighbouring states. It holds for two continuous-time Markov chains $Y_t^1$ and $Y_t^2$ that
\begin{align*}
  \frac{\dd}{\dd t}\E[\dd(Y_t^1,Y_t^2)]
  = \E\big[\L_h^2(\dd(Y_t^1,Y_t^2))\big]
  \le -\kappa_1\E[\dd(Y_t^1,Y_t^2)],
\end{align*}
where d is the graph distance, $\L_h^2$ is the operator $\L_h$ on the product space $\K_h\times\K_h$, and $\kappa_1>0$ is related to the gap of the strictly diagonally dominance of the Hessian of $V$. We apply Gr\"onwall's inequality, take the minimum over all couplings, and use the fact that the graph and Euclidean distance are equivalent (with constant $\sqrt{d}$) to conclude \eqref{1.res2} from the previous inequality.

If the potential is additive, we obtain exponential contraction in the $L^2$ Wasserstein distance and, in one space dimension, in the $L^p$ Wasserstein distance with $p\ge 2$ (up to some numerical error). The contraction rate $\kappa_\phi$ in \eqref{1.res1} is of the order of the convexity constant $\kappa$. 

We prove similar results as \eqref{1.res1}--\eqref{1.res2} for discrete-time Markov chains associated to an explicit Euler scheme for the Fokker--Planck equation. Here, the idea is to apply the discrete Bakry--Emery method of \cite{JuSc17}, which is based on an estimate between the Fisher information $\mathcal{E}(\phi'(f),f)$ and the entropy production $-(\dd\mathcal{H}^\phi/\dd t)(S_t f|m_h)$ (which are the same in the continuous setting). We do not obtain contraction but exponential decay with a prefactor that depends on the initial datum $f$. 

\subsection{Comparison to the literature}

Based on the Bakry--Emery approach, $\phi$-entropy bounds for solutions to the Fokker--Planck equation are provided in \cite{BoGe10} from a probabilistic viewpoint and in \cite{AMTU01} from a PDE viewpoint. For discrete settings, the analysis becomes more involved because of the lack of a general (nonlinear) chain rule. We are aware of the following approaches. 

Caputo et al.\ used in \cite{CPP09} a new Bochner-type inequality, which replaces the Bochner identity of the continuous case, to prove a modified logarithmic Sobolev inequality for certain continuous-time Markov chains, including zero-range processes and Bernoulli--Laplace models. The idea to employ such an inequality was first presented in \cite{BCDP06}. The Bochner--Bakry--Emery method was extended by Fathi and Maas in \cite{FaMa16} in the context of Ricci curvature bounds and in \cite{JuYu17} to prove discrete Beckner inequalities, which were also derived in \cite{BoTe06} using an iteration method. 

Mielke investigated in \cite{Mie13} geodesic convexity properties of nonlocal transportation distances on probability spaces such that continuous-time Markov chains can be formulated as gradient flows. It is known that geodesic convexity implies exponential decay \cite{AGS05}. This idea was extended to nonlinear Fokker--Planck equations in \cite{CJS19}. 

A discrete Bakry--Emery method was suggested in \cite{JuSc17}. Unlike in the continuous case, the discrete entropy production and the Fisher information are distinguished and compared to each other. The Bakry--Emery method relies on an estimate of the production of the Fisher information, which requires discrete versions of integrations by parts and suitable chain rules. The nonlinear integration-by-parts formulas are ``translated'' to the discrete case by using the systematic integration-by-parts method of \cite{JuMa06}. This method allows for the treatment of nonlinear equations, but it seems to be restricted to numerical three-point schemes and hence to one-dimensional equations only.

Conforti established in \cite{Con22} a new probabilistic approach to prove convex Sobolev inequalities of the type \eqref{1.csi} and to quantify the exponential decay in terms of the entropy for continuous-time Markov chains. His approach uses coupling rates to treat the second time derivative of the entropy, thus avoiding the use of discrete Bochner-type inequalities. Recently, Pedrotti \cite{Ped23} has analyzed how contractive coupling rates can be used to prove stronger inequalities in the form of curvature lower bounds for Markov chains and geodesic convexity of entropy functionals. 

Maas and Matthes \cite{MaMa16} suggested a finite-volume discretization for a nonlinear parabolic fourth-order equation, which can be written as a gradient flow similarly as the Fokker--Planck equation \eqref{1.FP}. The long-time asymptotics was shown for additive potentials by taking advantage of the factorization of the problem. Canc\`es and Venel \cite{CaVe23} studied an implicit Euler finite-volume approximation of nonlinear Fokker--Planck equations using a discretization of the flux, which was motivated by probabilistic exclusion processes (and which have some similarity to our rates \eqref{1.c}). Their approach allows for non-additive potentials, but the long-time asymptotics was not investigated. The finite-volume scheme of \cite{ChHe20} allows for the proof of exponential equilibration, but the decay rate depends on the size of the domain (because of the use of the Poincar\'e inequality; also see \cite{FiHe17}), while we assume strong convexity of the potential. A link between finite-volume-type discretizations for the Fokker--Planck equation in Vorono\"{\i} meshes and gradient flows with discrete Wasserstein distances on graphs was established in \cite{AlMa17}. 

In this paper, we use Conforti's coupling method to quantify the exponential decay of finite-volume schemes allowing for non-additive potentials and with a decay rate only depending on the potential and not on the domain size.

This work is organized as follows. In Section \ref{sec.def}, the continuous-time Markov chain is defined and its unique invariant measure is determined. Our main results for continuous-time and discrete-time Markov chain are presented in Sections \ref{sec.main} and \ref{sec.main2}, respectively. We introduce Conforti's coupling method in Section \ref{sec.coupl} and prove the main results in Section \ref{sec.proofs} (for continuous-time Markov chains) and Section \ref{sec.proofs2} (for discrete-time Markov chains).


\section{Definition of the continuous-time Markov chain}\label{sec.def}

The state space is defined by $D=[-K,K]^d$ for $K>0$, $d\ge 1$. Let $h>0$ be such that $2K/h\in\N$, and let $e_1,\ldots,e_d$ be the Euclidean basis vectors of $\R^d$. The rectangular finite-volume mesh is given by a family of control volumes (cubes) 
$$
  N_i = [i_1-h/2,i_1+h/2]\times\cdots\times[i_d-h/2,i_d+h/2]
$$ 
with center $i=(i_1,\ldots,i_d)\in\K_h$, where $i_j=-K+hn_j-h/2$ for $n_j\in\{1,\ldots,2K/h\}$. Let $\sigma>0$ be a diffusion constant and $V\in C^2(\R^d)$ be a potential.

To define the continuous-time Markov chain, we recall definition \eqref{1.c} of the transition rates. The rate $c(i,\pm_j)$ defines the numerical flux from the cell $N_i$ to $N_{i\pm h e_j}$. The definition of $c(i,\pm_j)$ implies that the numerical flux at the boundary vanishes. The rates are path-independent in the sense
\begin{equation}\label{2.path}
  c(i,\pm_j)c(i\pm he_j,\pm_\ell) = c(i,\pm_\ell)c(i\pm he_\ell,\pm_j)
  \quad\mbox{for }i\in\K_h,\ j,\ell=1,\ldots,d.
\end{equation} 
This identity means that the rate of jumping first in direction $j$ and then in direction $\ell$ is the same as moving first in direction $\ell$ and then $j$. The discrete generator $\L_h$ can be written as
\begin{equation}\label{2.Lh}
  \L_h f(i) = \sum_{j=1}^d\big[c(i,+_j)\big(f(i+he_j)-f(i)\big)
  + c(i,-_j)\big(f(i-he_j)-f(i)\big)\big].
\end{equation}

With these preparations, we can define the continuous-time Markov chain $(Y_t^h)_{t\ge 0}$ on $\K_h$ via the discrete generator $\L_h$. Recall that $G_i$ denotes the set of all moves from $i$ to $i\pm he_j$ for $j=1,\ldots,d$, written as $\pm_j$. Let $(Z_n^h)_{n\in\N}$ be a discrete-time Markov chain on $\K_h$ with initial distribtion $\nu$, given by the transition kernel
\begin{align}\label{1.tk}
  \pi(i,\bar\imath) = \begin{cases}
  c(i,\gamma)/\mathcal{T} 
  &\mbox{if there exists }\gamma\in G_i\mbox{ such that }
  \bar\imath=\gamma i, \\
  1-\sum_{\gamma\in G_i}c(i,\gamma)/\mathcal{T} &\mbox{if }i=\bar\imath, \\
  0 &\mbox{else},
  \end{cases}
\end{align}
where $\mathcal{T}=2\max_{i\in\K_h}\sum_{\gamma\in G_i}c(i,\gamma)$. Let $T_n$ for $n\in\N$ be independent exponential random variables with parameter one, which are independent of $(Z_n^h)_{n\in\N}$. Set $S_n = \sum_{k=1}^n T_k/\mathcal{T}$. Then the continuous-time Markov chain is defined by
\begin{align}\label{2.Y}
  Y_t^h = \begin{cases}
  Z_n^h &\mbox{if }S_n\le t<S_{n+1}\mbox{ for some }n\in\N, \\
  \infty &\mbox{else}.
  \end{cases}
\end{align}
The corresponding transition function is denoted by $(p_t)_{t\ge 0}$. 

The discrete Fokker--Planck equation on $\K_h$ is given by 
$\pa_t u(t,i)=\L_h^* u(t,i)$, where $t\ge 0$, $i\in\K_h$, and the adjoint generator reads as
\begin{align}\label{2.Lh*}
  \L_h^* g(i) &= \sum_{j=1}^d\big(c(i-he_j,+_j)g(i-he_j)
  + c(i+he_j,-_j)g(i+he_j) \\
  &\phantom{xx}- c(i,+_j)g(i) - c(i,-_j)g(i)\big), \nonumber
\end{align}
with homogeneous Neumann boundary conditions. We observe that if $V$ is additive, i.e.\ $V(i)=\sum_{j=1}^d V_j(i_j)$ for $i\in\K_h$ (which allows for a factorization of the invariant measure), the finite-volume scheme and the generator coincide with the scheme and generator considered in \cite{MaMa16}.

The following lemma determines the invariant measure associated to the Markov chain.

\begin{lemma}\label{lem.invmeas}
The unique invariant measure $m_h$ of the continuous-time Markov chain $(Y_t^h)_{t\ge 0}$ with the transition rates \eqref{1.c} is given by
\begin{equation}\label{2.mh}
  m_h(i) = Z^{-1}\exp(-V^h(i)/\sigma^2) \quad
  \mbox{for }i\in\K_h
\end{equation}
with the normalization constant $Z=\sum_{i\in\K_h}\exp(-V^h(i)/\sigma^2)$.
\end{lemma}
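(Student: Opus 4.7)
The plan is to establish invariance via the detailed balance condition and then invoke irreducibility for uniqueness. The ansatz \eqref{2.mh} is the natural discrete Gibbs measure associated to the averaged potential $V^h$, so the symmetric form of the rates in \eqref{1.c} should produce a reversible chain.

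First I would verify the detailed balance identity
\begin{equation*}
  m_h(i)\,c(i,+_j) \;=\; m_h(i+he_j)\,c(i+he_j,-_j)
  \quad\text{for all } i\in\K_h \text{ with } i+he_j\in\K_h,\; j=1,\dots,d.
\end{equation*}
This is a direct computation: inserting \eqref{1.c} and \eqref{2.mh}, both sides equal $Z^{-1}(\sigma^2/h^2)\exp\bigl(-(V^h(i)+V^h(i+he_j))/(2\sigma^2)\bigr)$. When $i$ or $i+he_j$ lies on the boundary of $D$ and the opposite neighbor is outside $\K_h$, the corresponding rate vanishes on both sides of the equation by the second line of \eqref{1.c}, so the identity degenerates to $0=0$ and is preserved.

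Next I would deduce that $m_h$ is invariant by showing $\L_h^* m_h \equiv 0$ on $\K_h$. Inserting the detailed balance relation into the expression \eqref{2.Lh*} for $\L_h^*$, the incoming rate from $i-he_j$ contributes $c(i-he_j,+_j)m_h(i-he_j)=m_h(i)c(i,-_j)$, and the incoming rate from $i+he_j$ contributes $c(i+he_j,-_j)m_h(i+he_j)=m_h(i)c(i,+_j)$. These cancel the outgoing terms $c(i,+_j)m_h(i)$ and $c(i,-_j)m_h(i)$ exactly, for every $j$, so $\L_h^*m_h(i)=0$ for each $i\in\K_h$, including boundary cells where the no-flux convention in \eqref{1.c} is compatible with the cancellation.

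Finally, for uniqueness I would argue by irreducibility on the finite state space $\K_h$. Since $c(i,\pm_j)>0$ whenever $i\pm he_j\in\K_h$, any two cells are connected by a path of positive-rate moves along the coordinate axes, so the continuous-time chain generated by $\L_h$ is irreducible. On a finite irreducible state space the invariant probability measure is unique, and the positive probability measure $m_h$ defined by \eqref{2.mh} (normalized by $Z$) is therefore the unique one.

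I do not anticipate any real obstacle; the only point that requires a little care is ensuring that the detailed balance identity and the cancellation in $\L_h^*m_h$ remain consistent at boundary cells, which is handled automatically by the convention that $c(i,\pm_j)=0$ for moves leaving $\K_h$.
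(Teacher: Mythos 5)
Your proposal is correct and follows essentially the same route as the paper: the paper verifies stationarity by testing $\sum_{i}\L_h f(i)m_h(i)=0$ against arbitrary $f$, and the pairwise cancellation it exploits is exactly the detailed balance identity you check explicitly (which the paper itself records right after the lemma), while uniqueness is obtained in both cases from irreducibility of the finite-state chain. The boundary convention $c(i,\pm_j)=0$ for moves leaving $\K_h$ is handled in the same way via the indicator functions in the paper's computation.
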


\begin{proof}
The result follows immediately from the fact that the continuous-time Markov chain $(Y_n^h)_{n\in\N}$ satisfies the detailed-balance condition $c(i,+_j)m_h(i)=c(i+he_j,-_j)m_h(i+he_j)$ for all $i\in\K_h$ and $j=1,\ldots,d$, which implies  both the reversibility of the continuous-time Markov chain starting with initial measure $m_h$ and that $m_h$ given in \eqref{2.mh} is the unique invariant measure of $(Y_t^h)_{t\ge 0}$.

Alternatively, since the state space is finite and the Markov chain is irreducible and aperiodic, there exists a unique invariant measure $m_h$ and we can verify that $\sum_{i\in\K_h}\L_h f(i)m_h(i)=0$ holds for all positive functions $f$ on $\K_h$, which shows that \eqref{2.mh} is the invariant measure.
\end{proof}

\begin{remark}[Choice of $c(i,\gamma)$]\label{rem.c}\rm
By Definition \eqref{2.mh} of the invariant measure, we can write the discrete generator as
\begin{align*}
  &\L_h f(i) = \frac{\sigma^2}{h^2 Z}\sum_{j=1}^d
  \Big(\frac{M_h(i,+_j)}{m_h(i)}(f(i+he_j)-f(i)) 
  + \frac{M_h(i,-_j)}{m_h(i)}(f(i-he_j)-f(i))\Big), \\
  &\mbox{where } M_h(i,\pm_j) = \sqrt{m_h(i\pm he_j)m_h(i)}.
\end{align*}
The mean value $M_h(i,\pm_j)$ corresponds to the geometric mean used in  \cite[Cor.~5.5]{Mie13} (also see \cite[Sec.~4]{JuYu17}). Other choices for $M_h(i,\pm_j)$ can be found in \cite[Sec.~3.2]{Gli11} and \cite[Sec.~2.1.3]{ChHe20}. We observe that the difference of the flows to and from the neighboring cells satisfies
$$
  c(i,+_j)-c(i+he_j,-_j) = 2\frac{\sigma^2}{h^2}
  \sinh\bigg(\frac{V^h(i)-V^h(i+he_j)}{2\sigma^2}\bigg).
$$
This is related to the stochastic jump process in \cite[Sec.~1.2]{BoPe16}
and to the cosh structure of the dissipation potential of generalized gradient flows from \cite{LMPR17}. In \cite{BoPe16}, the transition rates of the one-dimensional Markov jump process depend only on the energy decay of the current state and do not take into account the energy difference between the current and the next state. Therefore, carrying these transition rates to multiple space dimensions, the path-independence property \eqref{2.path}, which is essential in our analysis, does not hold in general.
\qed\end{remark}


\section{Main results for continuous-time Markov chains}\label{sec.main}

We present the main results for continuous-time Markov chains. The proofs will be given in Section \ref{sec.proofs} and Appendix \ref{sec.conv}. 

\subsection{Convergence of the Markov chain approximation to the SDE}

Let $(Y_h^t)_{t\ge 0}$ be the continuous-time Markov chain solving the martingale problem with operator $\L_h$ defined in \eqref{2.Lh}, and let $(X_t)_{t\ge 0}$ be the solution to the SDE \eqref{1.sde}, which solves the martingale problem with operator $\L$ given by \eqref{1.L}. A Taylor expansion shows that $\L f(i)-\L_h f(i)=O(h)$ for $i\in\K_h$ and functions $f\in\mathcal{A}$, defined in \eqref{1.A}. This estimate is crucial in the proof that the Markov chain converges in distribution to the solution to \eqref{1.sde} as the grid size converges to zero, $h\to 0$. The following theorem is proved in Appendix \ref{sec.conv}.

\begin{theorem}[Convergence to SDE]\label{thm.conv}
Let $(Y_t^h)_{t\ge 0}$ be the continuous-time Markov chain with generator $\L_h$ on the grid $D\cap(h\Z)^d$ such that the laws of $Y_0^h$ converge in distribution to the measure $\mu_0$ as $h\to 0$. Then $(Y_t^h)_{t\ge 0}$ converges in distribution to the solution $(X_t)_{t\ge 0}$ to \eqref{1.sde} with initial datum $\mu_0$.
\end{theorem}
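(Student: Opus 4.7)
The plan is to prove convergence via the martingale-problem formulation in the style of Ethier and Kurtz. Three ingredients are needed: (i) uniform convergence of the discrete generators $\L_h f \to \L f$ on the test class $\mathcal{A}$, (ii) tightness of the laws of $Y^h$ in the Skorokhod space $D([0,\infty),D)$, and (iii) identification of any subsequential weak limit as the unique solution of the martingale problem associated with $(\L,\mathcal{A})$, which by the well-posedness statement cited from \cite{Ta79} coincides with the law of the reflected SDE \eqref{1.sde}. The recurring obstacle will be the interplay with the reflecting boundary: the Neumann condition built into $\mathcal{A}$ is what simultaneously closes the Taylor expansion at boundary-adjacent cells in step (i) and identifies the limit in step (iii) as the reflected rather than the absorbed diffusion.

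For step (i), I would Taylor expand $f(i\pm he_j)-f(i)$ and the exponentials in the rates \eqref{1.c}. Splitting into the symmetric combination $c(i,+_j)+c(i,-_j)=2\sigma^2/h^2+O(1)$ times the second difference of $f$, and the antisymmetric combination $c(i,+_j)-c(i,-_j)=-\pa_j V^h(i)/h+O(1)$ times the centered first difference, gives at interior cells $\L_h f(i)=\sigma^2\Delta f(i)-\na V^h(i)\cdot\na f(i)+O(h)$; since $\na V^h\to\na V$ uniformly, this yields $\L_h f\to\L f$ uniformly on interior cells. At cells adjacent to $\pa D_j^\pm$ one outward rate vanishes and the one-sided expression must be reconciled with $\L f(i)$ by using $\pa_j f=0$ on $\pa D_j^\pm$: expanding $f$ around the boundary face keeps only second-order terms, and a short computation shows that the one-sided discrete expression still reproduces $\L f(i)$ up to $O(h)$.

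For step (ii), the state space $D$ is compact so compact containment is automatic and only a modulus-of-continuity estimate is needed. Since $Y^h$ solves the martingale problem for $\L_h$, the process $M^{h,f}_t:=f(Y^h_t)-f(Y^h_0)-\int_0^t\L_h f(Y^h_s)\dd s$ is a martingale for every $f\in\mathcal{A}$. The uniform bound $\|\L_h f\|_\infty\le C_f$ from step (i), together with the fact that jumps of $f(Y^h)$ have size $O(h)$, yields uniform control on the predictable quadratic variation of $M^{h,f}$. Tightness then follows from the Aldous criterion, or equivalently from Theorem~3.9.1 of Ethier--Kurtz.

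For step (iii), let $X$ be the weak limit of a convergent subsequence $Y^{h_k}$. Passing to the limit in
\[
  \E\bigl[\Psi(Y^{h_k}_{s_1},\dots,Y^{h_k}_{s_n})\bigl(M^{h_k,f}_t-M^{h_k,f}_s\bigr)\bigr]=0
\]
for bounded continuous $\Psi$ and $0\le s_1\le\cdots\le s_n\le s\le t$, using the uniform generator convergence of step (i) together with continuity of $f$, shows that $f(X_t)-f(X_0)-\int_0^t\L f(X_s)\dd s$ is a martingale for every $f\in\mathcal{A}$. This is the Stroock--Varadhan / Tanaka characterization of the reflected diffusion \eqref{1.sde}, whose well-posedness is recalled from \cite{Ta79}. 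Combined with $Y^{h_k}_0\Rightarrow\mu_0$, which identifies the law of $X_0$ as $\mu_0$, uniqueness of this martingale problem upgrades the subsequential convergence to convergence of the full sequence.
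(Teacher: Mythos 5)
Your proposal is correct and lives in the same framework as the paper's proof (martingale problem for $\L_h$, Ethier--Kurtz weak-convergence theory, identification of the limit through uniqueness of the reflected diffusion from \cite{Ta79}), but the implementation differs in a worthwhile way. The paper verifies the hypotheses of the diffusion-approximation theorem \cite[Theorem 7.4.1]{EtKu86}: besides the drift processes $B^n_f(t)=f(Y^n_0)+\int_0^t\L_n f(Y^n_s)\,\dd s$ it also controls the covariation processes $A^n_{f,g}$, showing that the discrete carr\'e du champ converges to $2\sigma^2\na f\cdot\na g$, together with the vanishing of the jump sizes; relative compactness and the identification of limit points then come packaged with that theorem, and the process itself is recovered from the coordinate-separating test functions $f_j(x)=\sin(\pi x_j/(2K))$. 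You instead never touch the second-order characteristics: uniform convergence $\L_h f\to\L f$ on $\mathcal{A}$, tightness via the Aldous criterion using the uniform bounds $\|\L_h f\|_\infty\le C_f$ and the uniformly bounded predictable quadratic variation (note $c(i,\gamma)=O(h^{-2})$ while $(f(\gamma i)-f(i))^2=O(h^2)$, so the carr\'e du champ is $O(1)$, which you should state), and then a direct passage to the limit in the martingale relation, which requires fixed times to be a.s.\ continuity points of the limit --- guaranteed here since the jumps are $O(h)$ and the limit has continuous paths. What each route buys: yours has a lighter verification burden and makes explicit the one genuinely scheme-specific point, namely that at boundary-adjacent cells the one-sided rate combined with the Neumann condition $\pa_j f=0$ on $\pa D_j^\pm$ still reproduces $\sigma^2\pa_{jj}f$ up to $O(h)$ (the paper's appendix computation of $J^0$ is written with both rates present, i.e.\ for interior cells, so your treatment fills in a detail the paper glosses over); the paper's route, by checking $A^n_{f,g}$, gets compactness and limit identification in one stroke from the cited theorem without a separate tightness argument. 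Both arguments ultimately rest on the same final ingredient, the well-posedness of the martingale problem for the reflected diffusion, on which you and the paper rely identically.
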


To construct a continuous-time Markov chain that converges to the solution to the SDE $\dd X_t = -\na V(X_t)\dd t + \sqrt{2\sigma}\dd B_t$ in the whole space, with $(B_t)_{t\ge 0}$ being a Brownian motion, we need to modify the cube $D=[-K,K]^d$. In fact, we replace $K$ by $K_h$ for $h>0$ such that $K_h\to\infty$ as $h\to 0$. Then Theorem \ref{thm.conv} holds similarly after adapting the diffusion approximation of \cite[Theorem 7.4.1]{EtKu86}.


\subsection{Exponential decay in $\phi$-entropy}

Consider the Markov chain $(Y_t^h)_{t\ge 0}$ on $\K_h$, characterized by the generator \eqref{2.Lh}. Let $\phi:\R_+\to\R_+$ and $f,g:\K_h\to\R_+$ be functions, where $\R_+:=[0,\infty)$. We recall the definition of the $\phi$-entropy, relative to the invariant measure $m_h$:
\begin{equation}\label{2.Hphi}
  \mathcal{H}^\phi(f|m_h) = \sum_{i\in\K_h}\phi(f(i))m_h(i)
  - \phi\bigg(\sum_{i\in\K_h}f(i)m_h(i)\bigg).
\end{equation}
We also introduce the Dirichlet form
\begin{equation}\label{2.diri}
  \mathcal{E}(f,g) = -\sum_{i\in\K_h}f(i)(\L_hg)(i)m_h(i)
  = \frac12\sum_{i\in\K_h}\sum_{\gamma\in G}c(i,\gamma)
  \na_\gamma f(i)\na_\gamma g(i) m_h(i),
\end{equation}
where we have set $\na_\gamma f(i)=f(\gamma i)-f(i)$. The last identity follows from the reversibility of the Markov chain (see, e.g., \cite[(2.12)]{DaPo13}). 

Our aim is to find the optimal constant $\kappa_\phi$ such that the exponential decay \eqref{1.res1} in the $\phi$-entropy holds for all functions $f:\K_h\to\R_+$ and $t>0$, where $S_t$ is the Markovian semigroup generated by $\L_h$. For continuous-time processes, this problem is known to be equivalent to find the optimal constant for the convex Sobolev inequality \eqref{1.csi}. Introduce the family of functions
\begin{align}\label{3.phia}
  \phi_\alpha(x) = \begin{cases}
  (\alpha-1)^{-1}(x^\alpha-x)-x+1 &\mbox{if }1<\alpha\le 2, \\
  x\log x-x+1 &\mbox{if }\alpha=1.
  \end{cases}
\end{align}
Then the convex Sobolev inequality becomes the modified logarithmic Sobolev inequality for $\alpha=1$, the Poincar\'e inequality for $\alpha=2$, and the Beckner inequality for $1<\alpha<2$.

We need the following assumptions.

\begin{itemize}
\item[(A1)] Convexity of entropy: Let the functions $\phi\in C^1(\R_+;\R_+)$ and $\Phi:(0,\infty)^2\to\R_+$, defined by $\Phi(a,b)=(\phi'(a)-\phi'(b))(a-b)$, be convex.
\item[(A2)] Strong $\kappa$-convexity: Let $V\in C^1(D;\R)$ be such that there exists $\kappa>0$ such that $\langle x-y,\na V(x)-\na V(y)\rangle\ge\kappa|x-y|^2$ for all $x,y\in D$, where $\langle\cdot,\cdot\rangle$ is the inner product on $\R^d$.
\item[(A3)] It holds that for all $i,i+he_j\in\K_h$ and $j=1,\ldots,d$,
\begin{equation}\label{3.kappa+}
  \kappa_+(i,j) := c(i,+_j)-c(i+he_j,+_j) - \sum_{\gamma\in G\setminus
  \{\pm_j\}}\max\{c(i+he_j,\gamma)-c(i,\gamma),0\} > 0,
\end{equation}
and for all $i,i-he_j\in\K_h$ and $j=1,\ldots,d$,
\begin{equation}\label{3.kappa-}
  \kappa_-(i,j) := c(i,-_j)-c(i-he_j,-_j) - \sum_{\gamma\in G\setminus
  \{\pm_j\}}\max\{c(i-he_j,\gamma)-c(i,\gamma),0\} > 0.
\end{equation}
\end{itemize}

\begin{theorem}[Exponential decay]\label{thm.decay}
Let Assumptions (A1) and (A3) hold. Consider the continuous-time Markov chain $(Y_t^h)_{t\ge 0}$ on $\K_h$ with transition rates \eqref{1.c}, which satisfy \eqref{2.path}. Then the convex Sobolev inequality \eqref{1.csi} holds with constant
\begin{equation*}
  \kappa_\phi = \min_{i,i+he_j\in \K_h,\, j=1,\ldots, d} 
  \{\kappa_+(i,j)+\kappa_-(i+he_j,j)\},
\end{equation*} 
and the exponential decay \eqref{1.res1} is valid. Moreover, the modified logarithmic Sobolev inequality holds with $\kappa_1=2\kappa_\phi$ and the discrete Beckner inequality holds with $\kappa_\alpha=\alpha\kappa_\phi$ for $\alpha\in(1,2]$. 
\end{theorem}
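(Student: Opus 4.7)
The overall strategy is the one flagged just after \eqref{1.csi}: once the convex Sobolev inequality \eqref{1.csi} is established with the stated constant $\kappa_\phi$, differentiating $\mathcal{H}^\phi(S_tf|m_h)$ along the semigroup, using the reversibility of $m_h$ to rewrite the derivative as $-\mathcal{E}(\phi'(S_tf),S_tf)$, and applying Gronwall's inequality yield \eqref{1.res1}. The two ``named'' inequalities (modified logarithmic Sobolev and Beckner) then follow by specializing $\phi$ to $\phi_\alpha$ from \eqref{3.phia} and carrying out the standard algebraic manipulation that relates $\mathcal{E}(\phi'_\alpha(f),f)$ and $\mathcal{H}^{\phi_\alpha}(f|m_h)$ to their usual normalized forms; the extra factors $2$ (for $\alpha=1$) and $\alpha$ (for $1<\alpha<2$) are purely algebraic, coming from $\phi'_\alpha(x) = \alpha(x^{\alpha-1}-1)/(\alpha-1)$ together with the normalization $\sum_i f(i)m_h(i)=1$.

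The heart of the proof is \eqref{1.csi} itself, and here I would apply Conforti's general coupling theorem (Theorem \ref{thm.conforti} below), which reduces \eqref{1.csi} to producing, for each pair of neighboring states in $\K_h$, a coupling of the jump rates out of the two endpoints such that an expected post-jump $\Phi$-quantity contracts at rate at least $\kappa_\phi/2$. For the edge $(i,i+he_j)$ my coupling synchronously pairs the jumps in each direction $\gamma\in G\setminus\{\pm_j\}$ at the minimum rate $c(i,\gamma)\wedge c(i+he_j,\gamma)$, so that both copies move in parallel and the pair remains an edge along the same axis; for the directions $\pm_j$ the copies are coupled so that the smaller of the two rates produces a joint move in which they either coincide or swap positions, while the ``excess'' rate on the larger side leaves an uncoupled jump. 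A direct count of the net residual rates is engineered to reproduce exactly the quantities $\kappa_+(i,j)$ on one side of the edge and $\kappa_-(i+he_j,j)$ on the other; assumption (A3) is what makes both strictly positive, and the final $\kappa_\phi$ is obtained by minimizing over all edges and multiplying by $2$ to account for both orientations of each edge.

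The main obstacle will be the careful bookkeeping in this coupling construction: one must check that after the coupled jump the two copies are still in a configuration covered by Conforti's hypothesis (either identical or a single edge apart, never ``farther''), and this is where the path-independence identity \eqref{2.path} enters to guarantee consistency at corners and across different coordinate directions, while the homogeneous Neumann boundary (vanishing rates for outward jumps near $\partial D$) requires a small case-by-case adaptation. Assumption (A2) then feeds into (A3) quantitatively via Taylor expansion of the Gibbs-type rates \eqref{1.c}: strong $\kappa$-convexity of $V$ (together with the closeness of $V^h$ to $V$) forces the differences $c(i,\pm_j)-c(i\pm he_j,\pm_j)$ to strictly dominate the cross-direction perturbations $c(i\pm he_j,\gamma)-c(i,\gamma)$ for $\gamma\neq\pm_j$, yielding $\kappa_\pm(i,j)$ comparable to $\kappa$ and hence a $\kappa_\phi$ of order $\kappa$, as announced in the informal summary preceding the theorem.
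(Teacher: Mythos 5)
Your overall route is the paper's: reduce everything to Conforti's Theorem~\ref{thm.conforti} with the coupling \eqref{4.coupl}, get \eqref{1.res1} from \eqref{1.csi} by differentiating along the semigroup and Gr\"onwall. But two points in your sketch are genuine gaps. First, the entire analytic content of the theorem is the verification of condition (i) of Theorem~\ref{thm.conforti} with the \emph{specific} constant $\kappa_\phi=2\min\{\kappa_+(i,j),\kappa_-(i+he_j,j)\}$, and you only assert it (``a direct count \dots is engineered to reproduce exactly \dots''). In the paper this is Lemma~\ref{lem.ineq}: one splits the left-hand side of \eqref{4.kap} into a gain part $I$ and a loss part $J$, and the cancellation of all non-contracting contributions (the terms $\sum_\ell(c(i,+_\ell)+c(i,-_\ell))$) hinges on the path-independence identity \eqref{2.path} and on the invariant-measure identity $\sum_\ell\big(c(i-he_\ell,+_\ell)m_h(i-he_\ell)+c(i+he_\ell,-_\ell)m_h(i+he_\ell)\big)=\sum_\ell\big(c(i,+_\ell)+c(i,-_\ell)\big)m_h(i)$; without carrying out this bookkeeping the stated constant is unsupported. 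Moreover, your description of the coupling in the $\pm_j$ directions (``coincide or swap positions'' at the smaller rate) is not the coupling that makes this work: in \eqref{4.coupl} the minimum rate in the $\pm_j$ directions produces a \emph{synchronous parallel} move (the pair stays an edge apart), and coalescence occurs only through the residual rates $\kappa_+(i,j)$ (pair $(+_j,e)$) and $\kappa_-(i+he_j,j)$ (pair $(e,-_j)$). The factor $2$ in $\kappa_\phi$ comes from the total coalescence rate $\kappa_+(i,j)+\kappa_-(i+he_j,j)\ge 2\min\{\kappa_+(i,j),\kappa_-(i+he_j,j)\}$ along a single edge (combined with the matching loss terms in $J$), not from ``both orientations of each edge'': the sum over $(i,\delta)\in S$ counts every edge twice on both sides of the inequality, so that double counting cancels and yields no factor.

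Second, the claim that the constants $\kappa_1=2\kappa_\phi$ and $\kappa_\alpha=\alpha\kappa_\phi$ are ``purely algebraic'' consequences of \eqref{1.csi} is wrong and would fail. For $\phi=\phi_1$, inequality \eqref{1.csi} \emph{is} the modified logarithmic Sobolev inequality with constant $\kappa_\phi$, and no normalization or manipulation of $\phi'_\alpha$ upgrades it to $2\kappa_\phi$ (resp.\ $\alpha\kappa_\phi$). The improved constants come from parts (ii) and (iii) of Theorem~\ref{thm.conforti}, i.e.\ from additional lower bounds on the coalescing coupling rates, which must be checked from \eqref{4.coupl}: $\mathbf{c}(i,i+he_j,+_j,e)=\kappa_+(i,j)\ge\kappa_\phi/2$ and $\mathbf{c}(i,i+he_j,e,-_j)=\kappa_-(i+he_j,j)\ge\kappa_\phi/2$, hence $\kappa''=\kappa_\phi/2$ and $\kappa_1=\kappa'+2\kappa''=2\kappa_\phi$; and $\sum_{\gamma i=\bar\gamma\delta i}\mathbf{c}(i,\delta i,\gamma,\bar\gamma)=\kappa_+(i,j)+\kappa_-(i+he_j,j)\ge\kappa_\phi$, hence $\kappa'''=\kappa_\phi$ and $\kappa_\alpha=\kappa'+(\alpha-1)\kappa'''=\alpha\kappa_\phi$. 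A minor further point: in the theorem (A3) is a hypothesis, not a consequence of (A2); strong convexity implies (A3) only for additive potentials (Remark~\ref{rem.add}) or, via diagonal dominance of the Hessian, for sufficiently small $h$ (Remark~\ref{rem.A3}), so your last paragraph should be read as interpretation rather than as part of the proof.
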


Compared to \cite{MaMa16}, the coupling approach allows us to show exponential decay in the $\phi$-entropy for more general (non-additive) potentials. We observe that the constant $\kappa_\phi$ is independent of the size of the domain $D$. Moreover, $\kappa_\phi$ scales linearly with the size of the transition rates. In particular, if we multiply the rates by some constant and change the rate when a jump occurs, $\kappa_\phi$ changes by this factor.

In the following, we discuss Assumptions (A1)--(A3). The functions $\phi_\alpha$ from \eqref{3.phia} and $\Phi(a,b)=(\phi'_\alpha(a)-\phi'_\alpha(b))(a-b)$ satisfy Assumption (A1). 

\begin{remark}[Additive potentials]\label{rem.add}\rm
For additive potentials $V(i)=\sum_{j=1}^d V_j(i_j)$ ($i\in\K_h$) that satisfy Assumption (A2), also Assumption (A3) is fulfilled. Indeed, for jumps $\gamma\neq \pm_j$, a computation shows that $c(i\pm he_j,\gamma)-c(i,\gamma)=0$. Then
\begin{align*}
  \kappa_+(i,j) &= c(i,+_j)-c(i,+ he_j,+_j) \\
  &= \frac{\sigma^2}{h^2}\big(e^{-(V_j^h(i_j+h)-V_j^h(i_j))/(2\sigma^2)}
  - e^{-(V_j^h(i_j+2h)-V_j^h(i_j+h))/(2\sigma^2)}\big) \\
  &= \frac{\sigma^2}{h^2}\exp\bigg(-\frac{1}{2\sigma^2}\int_0^h
  \pa V_j^h(i_j+s)\dd s\bigg) \\
  &\phantom{xx}\times\bigg[1-\exp\bigg(-\frac{1}{2\sigma^2}
  \int_0^h\big(\pa V_j^h(i_j+h+s)-\pa V_j^h(i_j+s)\big)\dd s
  \bigg)\bigg].
\end{align*}
Assumption (A2) implies for additive potentials that
$h(\pa V_j^h(i_j+h+s)-\pa V_j^h(i_j+s))\ge\kappa h^2$ and hence
$$
  \kappa_+(i,j)\ge\frac{\sigma^2}{h^2}
  \exp\bigg(-\frac{1}{2\sigma^2}\int_0^h\pa V_j^h(i_j+s)\dd s\bigg)
  \bigg[1-\exp\bigg(-\frac{1}{2\sigma^2}\int_0^h\kappa h\dd s\bigg)\bigg]
  > 0.
$$
It follows similarly that $\kappa_-(i,j)>0$, thus verifying Assumption (A3). We observe that the last expression converges to $\kappa/2$ as $h\to 0$. Thus, the decay rate $\kappa_\phi$ is asymptotically optimal in this situation.
\qed\end{remark}

\begin{remark}[Strictly diagonally dominance of $\mathrm{D}^2 V$]
\label{rem.A3}\rm
Assumption (A3) can be interpreted as a modified strictly diagonally dominant condition on the Hessian of $V$. Indeed, let $V\in C^2(D;\R)$ and let the Hessian $\mathrm{D}^2 V(i)\in\R^{d\times d}$ of $V$ be strictly diagonally dominant, i.e., there exists $c>0$ such that for all $i\in\K_h$ and $j=1,\ldots,d$,
$$
  (\mathrm{D}^2 V)_{jj}(i) 
  - \sum_{\ell\neq j}|(\mathrm{D}^2 V)_{\ell j}(i)| \ge c.
$$
Let $\pa_{\pm j}^h$ be the discrete derivative on $\K_h$ in direction $j$, defined by $\pa_{\pm j}^h f(i) = \pm(f(i+he_j)-f(i))/h$. A Taylor expansion yields
\begin{align*}
  c(i,+_\ell) - c(i+he_j,+_\ell) 
  &= \pa_{+j}^h\pa_{+\ell}^h V^h(i) + O(h), \\
  c(i,-_\ell) - c(i+he_j,-_\ell)
  &= \pa_{+j}^h\pa_{-\ell}^h V^h(i) + O(h), \mbox{ etc}.
\end{align*}
Then $\kappa_\pm(i,j)$ from Assumption (A3) can be approximated by
\begin{align*}
  \kappa_+(i,j) &= \pa_{+j}^h\pa_{+j}^h V^h(i)
  - \sum_{\ell\neq j}\big(\max\{\pa_{+j}^h\pa_{+\ell}^h V^h(i),0\}
  + \max\{-\pa_{+j}^h\pa_{-\ell}^h V^h(i),0\}\big) + O(h), \\
  \kappa_-(i,j) &= \pa_{-j}^h\pa_{-j}^h V^h(i)
  - \sum_{\ell\neq j}\big(\max\{\pa_{-j}^h\pa_{-\ell}^h V^h(i),0\}
  + \max\{-\pa_{-j}^h\pa_{+\ell}^h V^h(i),0\}\Big) + O(h).
\end{align*}
In the limit $h\to 0$, the discrete second derivatives of $V^h$ converge to the Hessian of $V$. Thus, if $\mathrm{D}^2 V$ is strictly diagonally dominant, Assumption (A3) is satisfied for sufficiently small $h>0$.
\qed\end{remark}


\subsection{Exponential decay in the Wasserstein distance}

Let $p\ge 1$ and let $\nu$, $\eta$ be two probability measures on $\K_h$ with finite $p$th moment. The $L^p$ Wasserstein distance between $\nu$ and $\eta$ with respect to the Euclidean distance is defined by
\begin{equation}\label{2.Wp}
  \mathcal{W}_p(\nu,\eta) = \inf_{\gamma\in\Gamma(\nu,\eta)}
  \bigg(\int_{\K_h\times\K_h}|x-y|^p\gamma(\dd x\dd y)\bigg)^{1/p},
\end{equation}
where $\Gamma(\nu,\eta)$ denotes the set of all couplings between $\nu$ and $\eta$, i.e.\ the set of all probability measures on $\mathcal{K}_h\times\mathcal{K}_h$ with marginals $\nu$ and $\eta$.
We first present results for additive potentials.

\begin{theorem}[Convergence in Wasserstein distance]\label{thm.W}
Let the potential $V\in C^2(\R^d)$ be of additive form, $V(i)=\sum_{j=1}^d V_j(i_j)$ for $i\in\K_h$, satisfy Assumption (A2), and possess a Lip\-schitz continuous gradient. Furthermore, let $\nu$ and $\eta$ be two probability measures on $\K_h$ and $(Y_t^h)_{t\ge 0}$ be the continuous-time Markov chain with transition function $(p_t)_{t\ge 0}$ associated to the transition rates \eqref{1.c}. Then it holds with the constant $\kappa>0$ from Assumption (A2) that
$$
  \mathcal{W}_2(\nu p_t,\eta p_t) 
  \le e^{-\kappa t}\mathcal{W}_2(\nu,\eta) + O(h^{1/2})
  \quad\mbox{for all }t\ge 0.
$$
Moreover, in one space dimension and for $p\ge 2$,
$$
  \mathcal{W}_p(\nu p_t,\eta p_t) 
  \le e^{-\kappa t}\mathcal{W}_p(\nu,\eta) + O(h^{1/p})
  \quad\mbox{for all }t\ge 0.
$$
\end{theorem}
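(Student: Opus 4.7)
The proof strategy exploits the additivity of $V$ to reduce the problem to $d$ independent one-dimensional chains. Since $V(i)=\sum_{j=1}^d V_j(i_j)$, the rates in \eqref{1.c} depend only on a single coordinate, so $\mathcal{L}_h = \sum_{j=1}^d\mathcal{L}_{h,j}$ with commuting one-dimensional summands, the invariant measure of Lemma~\ref{lem.invmeas} factorizes as a product, and the Markov chain is a product of $d$ independent one-dimensional chains. I therefore start from an optimal $\mathcal{W}_p$-coupling of $\nu$ and $\eta$, evolve each coordinate under an independent one-dimensional coupling defined below, and recombine via Minkowski's inequality in $\ell^2$ for the multidimensional $\mathcal{W}_2$ statement.

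In each coordinate, assume $x > y$ in $\mathcal{K}_h$. Strong convexity of $V_j$ implies $c(x,+) \le c(y,+)$ and $c(x,-) \ge c(y,-)$, so I use the monotone synchronous coupling: both chains jump right together at rate $c(x,+)$ and left together at rate $c(y,-)$, the lower chain jumps right alone at rate $c(y,+)-c(x,+)$, and the upper chain jumps left alone at rate $c(x,-)-c(y,-)$. Every solitary jump decreases $|X-Y|$ by exactly $h$, coupled jumps leave it unchanged, and boundary configurations (for instance $c(x,+)=0$ at the right edge) fit into the same recipe. Using the explicit form of the rates, the mean-value formula for $V^h$, and strong convexity $V_j'(x)-V_j'(y)\ge\kappa(x-y)$, I verify the drift estimate
\[
  h\bigl[(c(y,+)-c(x,+)) + (c(x,-)-c(y,-))\bigr] \ge \kappa(x-y)\bigl(1 - Ch\bigr),
\]
with a matching upper bound $\le L(x-y)$ from the Lipschitz hypothesis on $V_j'$.

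Setting $r = X - Y \ge 0$ in 1D, a direct computation gives $\mathcal{L}^{(2)} r^p = \Delta c\,[(r-h)^p - r^p]$ with $\Delta c=(c(y,+)-c(x,+))+(c(x,-)-c(y,-))$; Taylor expansion $(r-h)^p-r^p = -phr^{p-1} + O(h^2r^{p-2})$ for $r\ge h$ combined with the drift estimates yields
\[
  \mathcal{L}^{(2)} r^p \le -p\kappa\,r^p + Ch\,r^{p-1},
\]
the coincidence case $r=h$ contributing only an extra non-positive correction $-h^p\Delta c$. Since $r \le 2K$ on $\mathcal{K}_h$, bounding the remainder crudely by $Chr^{p-1}\le C'h$ gives $\tfrac{d}{dt}\mathbb{E}r_t^p \le -p\kappa\,\mathbb{E}r_t^p + C'h$, so Gr\"onwall yields $\mathbb{E}r_t^p \le e^{-p\kappa t}\mathbb{E}r_0^p + C''h$. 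Taking $p$-th roots using $(a+b)^{1/p} \le a^{1/p}+b^{1/p}$ and infimizing over initial couplings produce the 1D $\mathcal{W}_p$ bound with error $O(h^{1/p})$; for the general-dimensional $\mathcal{W}_2$ statement, Minkowski in $\ell^2$ combines the 1D bounds over the $d$ coordinates (the $\sqrt{d}$ factor is absorbed into the additive $O(h^{1/2})$ remainder, without loss in the exponential rate). The main obstacle is verifying the drift lower bound with the sharp rate $\kappa$ and not some smaller $\kappa'$: this requires careful tracking of $e^{-(\Delta V_x - \Delta V_y)/(2\sigma^2)}$ under the strong convexity of $V^h$ and of its Lipschitz counterpart from above, so that the two halves $h(c(y,+)-c(x,+))$ and $h(c(x,-)-c(y,-))$ each contribute $\kappa(x-y)/2$ to leading order.
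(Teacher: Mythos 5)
Your proof is correct and takes essentially the same route as the paper: the monotone synchronous coupling you describe coordinate-wise is exactly the coupling \eqref{5.coupl} (which, under additivity, acts independently on each coordinate), and the drift lower bound from strong convexity combined with the Lipschitz-gradient control of the prefactor, the Taylor expansion of $(r\pm h)^p-r^p$, and Gr\"onwall plus taking roots and infimizing over initial couplings are the same steps. The only cosmetic difference is that you reduce to one dimension per coordinate and recombine via Minkowski for $\mathcal{W}_2$, whereas the paper computes $\L_h^2|i-k|^2$ directly on the product space; since the squared Euclidean distance is additive over coordinates, this is the same computation.
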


The error term $O(h^{1/p})$ can be avoided by relaxing the contraction rate to $\kappa-O(h)$ for sufficiently small $h>0$. If the potential is not of additive form, we can still conclude long-time guarantees in the $L^1$ Wasserstein distance.

\begin{theorem}[Convergence in $L^1$ Wasserstein distance]\label{thm.W1}
Let the potential satisfy Assumption 
(A3). Furthermore, let $\nu$ and $\eta$ be two probability measures on $\K_h$ and $(Y_t^h)_{t\ge 0}$ be the continuous-time Markov chain with transition function $(p_t)_{t\ge 0}$ associated to the transition rates \eqref{1.c}. Then it holds that
$$
  \mathcal{W}_1(\nu p_t,\eta p_t) 
  \le \sqrt{d}e^{-\kappa_1 t}\mathcal{W}_1(\nu,\eta)
  \quad\mbox{for all }t\ge 0,
$$
where $d$ is the dimension of the state space and 
\begin{equation}\label{2.kappa1}
  \kappa_1 = \min_{i\in\K_h,\,j=1,\ldots,d}
  \big(\kappa_+(i,j)+\kappa_-(i+he_j,j)\big).
\end{equation}
\end{theorem}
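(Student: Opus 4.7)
The plan is to apply the path coupling method of Bubley-Dyer, as flagged in the introduction, working with the natural graph distance on $\K_h$ (the minimum number of unit edges between two cells), which I denote $\mathrm{d}(\cdot,\cdot)$. Since $\mathrm{d}$ is a path metric, by path coupling it suffices to exhibit, for each pair of neighbors $(i,\bar\imath)$ with $\mathrm{d}(i,\bar\imath)=1$, a Markovian coupling $(Y_t^1,Y_t^2)$ of two copies of the chain starting at $(i,\bar\imath)$ such that
\begin{equation*}
  \frac{\dd}{\dd t}\E[\mathrm{d}(Y_t^1,Y_t^2)]\Big|_{t=0^+}
  = \L_h^2\,\mathrm{d}(i,\bar\imath) \le -\kappa_1\,\mathrm{d}(i,\bar\imath).
\end{equation*}

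Fix such a pair with $\bar\imath = i + he_j$. I would use the standard maximally synchronous coupling on $\K_h\times\K_h$: for every move direction $\gamma\in G$, the two chains jump together at the common rate $\min\{c(i,\gamma),c(\bar\imath,\gamma)\}$, and the excess $|c(i,\gamma)-c(\bar\imath,\gamma)|$ is assigned to a single-chain jump of whichever endpoint carries the larger rate. A case analysis on the resulting jumps gives: (a) every synchronous move preserves $\mathrm{d}=1$ and contributes $0$; (b) the parallel single-chain excesses $(c(i,+_j)-c(\bar\imath,+_j))_+$ and $(c(\bar\imath,-_j)-c(i,-_j))_+$ send $\mathrm{d}$ to $0$, because chain~$1$ jumps $+_j$ onto $\bar\imath$ or chain~$2$ jumps $-_j$ onto $i$; (c) all remaining single-chain events -- the orthogonal jumps and the "wrong-sign" parallel excesses -- move only one chain by $\pm he_\ell$ with $\ell\neq j$ or by $\pm he_j$ in the wrong direction, pushing $\mathrm{d}$ to $2$.

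Summing (change of $\mathrm{d}$)\,$\times$\,(rate) in the coupled generator and using the elementary identity $(x)_+ - (-x)_+ = x$, the signed parallel contributions collapse to $-(c(i,+_j)-c(\bar\imath,+_j))-(c(\bar\imath,-_j)-c(i,-_j))$, while the orthogonal contributions total $\sum_{\gamma\ne\pm_j}|c(i,\gamma)-c(\bar\imath,\gamma)|$. Expressing the two parallel gaps through \eqref{3.kappa+}--\eqref{3.kappa-} reintroduces exactly the correction sums $\sum_{\gamma\ne\pm_j}(c(\bar\imath,\gamma)-c(i,\gamma))_+$ and $\sum_{\gamma\ne\pm_j}(c(i,\gamma)-c(\bar\imath,\gamma))_+$, whose total is the orthogonal absolute-value sum and therefore cancels the contributions from (c). This leaves the clean per-edge bound $\L_h^2\mathrm{d}(i,\bar\imath)\le -\kappa_1$. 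Bubley-Dyer together with Gr\"onwall's inequality promotes this to $\mathcal{W}_1^{\mathrm{gr}}(\nu p_t,\eta p_t)\le e^{-\kappa_1 t}\mathcal{W}_1^{\mathrm{gr}}(\nu,\eta)$ for the Wasserstein distance built from $\mathrm{d}$, and the elementary norm equivalence $|x-y|\le h\,\mathrm{d}(x,y)\le\sqrt{d}\,|x-y|$ carries over to the associated Wasserstein distances to produce the factor $\sqrt{d}$ in the conclusion.

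The main obstacle is the book-keeping in the third step: one must track precisely which single-chain jumps contract $\mathrm{d}$ and which expand it, and see that Assumption (A3) has been formulated exactly so that the off-diagonal correction sums in $\kappa_\pm(i,j)$ absorb the expanding orthogonal contributions from (c). Edges with one endpoint on $\partial D$ require a brief additional check, since one of the "wrong-sign" rates vanishes there; this only removes a non-negative term from $\L_h^2\mathrm{d}$ and therefore does not weaken the contraction.
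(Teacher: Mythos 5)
Your argument is correct, and it shares the paper's outer skeleton (Bubley--Dyer path coupling for the graph distance, Gr\"onwall's inequality, then the equivalence of graph and Euclidean distances producing the factor $\sqrt{d}$), but the per-edge computation is done with a different coupling. The paper evaluates $\L_h^2\,\mathrm{d}(i,i+he_j)$ with the Conforti-type coupling \eqref{4.coupl}, in which an orthogonal rate discrepancy is paired with a simultaneous $+_j$ (resp.\ $-_j$) move of the other chain, so the graph distance never increases and the only distance-changing events are the two coalescing jumps at rates $\kappa_+(i,j)$ and $\kappa_-(i+he_j,j)$; the contraction rate is then read off in two lines. You instead use the maximal synchronous coupling (the same one the paper employs in \eqref{5.coupl} for the $\mathcal{W}_2$ estimate), under which orthogonal discrepancies do expand the distance to $2$, and you recover the bound through the cancellation you describe: rewriting the parallel gaps via \eqref{3.kappa+}--\eqref{3.kappa-} reintroduces exactly $\sum_{\gamma\neq\pm_j}|c(i+he_j,\gamma)-c(i,\gamma)|$, which cancels the expanding contributions and leaves $\L_h^2\,\mathrm{d}(i,i+he_j)=-\big(\kappa_+(i,j)+\kappa_-(i+he_j,j)\big)\,\mathrm{d}(i,i+he_j)$, the same exact value obtained in the paper. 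Your route avoids constructing the coupling \eqref{4.coupl} for this theorem at the cost of slightly heavier bookkeeping, whereas the paper's choice makes the monotonicity of the distance manifest; your treatment of boundary edges is also fine, since vanishing rates only delete nonnegative terms. One cosmetic point you share with the paper: the per-edge rate pairs $\kappa_+(i,j)$ with $\kappa_-(i+he_j,j)$, while \eqref{2.kappa1} pairs $\kappa_+$ and $\kappa_-$ at the same node $i$; this indexing mismatch appears in the paper's own proof as well and does not affect the validity of your argument.
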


\begin{remark}[Discussion of Theorem \ref{thm.W1}]\rm
The contraction rate does not equal $\kappa>0$ from Assumption (A2) but is generally smaller. This is due to the fact that we use a rectangular mesh in the approximation. In the case of additive potentials, the value $\kappa_1$ is of order $\kappa$. The prefactor $\sqrt{d}$ originates from the change of the Wasserstein distance $\mathcal{W}_{\dd,1}$ with respect to the graph distance to the Euclidean Wasserstein distance $\mathcal{W}_1$. Indeed, we prove first, using the path coupling method of \cite{BuDy97}, that
\begin{equation}\label{3.Wd1}
  \mathcal{W}_{\dd,1}(\nu p_t,\eta p_t) \le e^{-\kappa_1 t}
  \mathcal{W}_{\dd,1}(\nu,\eta),
\end{equation}
where $\dd(x,y)=\sum_{j=1}^d|x_j-y_j|$ for $x,y\in\R^d$ is the graph distance. Our result then follows from the fact that the graph distance is equivalent to the Euclidean distance, resulting in the prefactor $\sqrt{d}$. 
\qed\end{remark}

\begin{remark}[Finite-difference scheme]\rm
The finite-difference discretization of \eqref{1.FP} reads as
$$
  \pa_t u(t,i) = \L_h^* u(t,i) = \sigma^2\Delta_h u(t,i)
  + \diver_h(u(t,i)\na V(t,i)),
$$ 
where the discrete Laplacian and divergence are defined by
\begin{align*}
  \Delta_h f(i) &= \frac{1}{h^2}\sum_{j=1}^d\big(f(i+he_j) - 2f(i)
  + f(i-he_j)\big), \\
  \diver_h g(i) &= \frac{1}{2h}\sum_{j=1}^d\big(g_j(i+he_j)-g_j(i-he_j)
  \big)
\end{align*}
for $i\in\K_h$ and functions $f:\K_h\to\R$ and $g:\K_h\to\R^d$. The generator of the associated Markov chain reads as \eqref{1.L}, but the jump rates are now given by
\begin{equation*}
  c(i,\pm_j) = \begin{cases}
  h^{-2}(\sigma^2\mp h\pa_j V(i)/2) &\mbox{if }i+he_j,i-he_j\in\K_h, \\
  h^{-2}(2\sigma^2\mp h\pa_j V(i)) & \mbox{if }i\mp he_j\not\in\K_h, \\
  0 &\mbox{if }i\pm he_j\not\in\K_h.
  \end{cases}
\end{equation*}
At the boundary $\pa\K_h$, the process is reflected, which corresponds at the level of the Fokker--Planck equation to homogeneous Neumann boundary conditions. To ensure the positivity of the rates, we need to impose the general assumption
\begin{equation}\label{3.fda}
  \inf_{x\in D,\,j=1,\ldots,d}\bigg(\sigma^2 - \frac{h}{2}|\pa_j V(x)|
  \bigg) \ge 0.
\end{equation}
We observe that the rates given above are a first-order approximation of the jump rates \eqref{1.c}. As for the finite-volume scheme, we define the continuous-time Markov chain $(Y_t^h)_{t\ge 0}$ on $\K_h$ via the discrete generator $\L_h$ by \eqref{2.Y}. Then $(Y_t^h)_{t\ge 0}$ is non-explosive, irreducible, and recurrent. 

For additive potentials $V(x)=\sum_{j=1}^d V_j(x_j)$, a computation shows that the invariant measure of $(Y_t^h)_{t\ge 0}$ equals
$$
  m_h(i) = \frac{1}{Z}\prod_{j=1}^d\prod_{\ell\in\K_h,\,\ell<i_j}
  \frac{\sigma^2-h\pa V_j(\ell)/2}{\sigma^2 + h\pa V_j(\ell+h)/2},
$$
where $Z>0$ is a normalization constant. For non-additive potentials and multiple space dimensions, the structure of the invariant measure becomes more complicated compared to the finite-volume scheme and we leave details to the reader.

The finite-difference scheme has the drawback that the rates are not commutative for more than one space dimension in the sense that \eqref{2.path} does not hold. However, assuming that the potential is additive, it is possible to carry over the results stated for the finite-volume scheme to the finite-difference discretization (supposing condition \eqref{3.fda}). Again, we leave the details to the reader.
\qed\end{remark}


\section{Main results for discrete-time Markov chains}\label{sec.main2}

\subsection{Exponential decay in $\phi$-entropy}

We consider a discrete-time Markov chain as an approximation of the solution to the SDE \eqref{1.sde}. Instead of exponentially distributed random times, we choose equidistant time steps, which means that for fixed $\tau>0$, the Fokker--Planck equation is discretized according to the explicit Euler finite-volume scheme
$$
  \frac{1}{\tau}\big(u(t_{n+1},i) - u(t_n,i)\big) 
  = \L_h^* u(t_n,i)\quad\mbox{for }i\in\K_h,\ t_n=n\tau,\ n\in\N,
$$
where $\L_h^*$ is defined in \eqref{2.Lh*}. The corresponding Markov chain $(Z_n^h)_{n\in\N}$ is defined through the transition kernel \eqref{1.tk}. Interpreting $\pi$ as the matrix with entries $\pi(i,k)$, we have
$$
  \pi f(i) = \sum_{k\in\K_h}\pi(i,k)f(k).
$$
With the choice of the rescaling factor $\mathcal{T}=2\max_{i\in\K_h}\sum_{\gamma\in G_i} c(i,\gamma)$, we obtain a lazy random walk, where we remain at the current position with probability at least 1/2. Furthermore, we introduce the rescaled jump rates
\begin{equation}\label{3.p}
  p(i,\gamma) = \frac{c(i,\gamma)}{\mathcal{T}}\quad\mbox{for }
  \gamma\in G_i.
\end{equation}
and we set $\tau:=\mathcal{T}^{-1}$. 

In the situation of Lemma \ref{lem.invmeas}, the unique invariant measure of the discrete-time Markov chain $(Z_n^h)_{n\in\N}$ with transition kernel \eqref{1.tk} is still given by \eqref{2.mh}. Indeed, the lazyness of the random walk ensures the aperiodicity of the Markov chain, which yields the uniqueness of an invariant measure and moreover, it holds for all functions $f:\K_h\to\R$ that
\begin{equation}\label{2.inv}
  \sum_{i\in\K_h}\pi f(i)m_h(i)
  = \sum_{i\in\K_g}\sum_{\gamma\in G}p(i,\gamma)f(\gamma i)m_h(i)
  = \sum_{i\in\K_h}f(i)m_h(i).
\end{equation}

Unfortunately, we cannot use the convex Sobolev inequality \eqref{1.csi} to prove the exponential decay in the $\phi$-entropy, since the discrete time derivative of the $\phi$-entropy does not correspond to the Dirichlet form. We overcome this issue by applying the abstract method of \cite{JuSc17}, which distinguishes the entropy production and the Fisher information (which both equal the Dirichlet form in the continuous case). Interestingly, while the approach of \cite{JuSc17} is based on the implicit Euler scheme, we are able to analyze the explicit Euler scheme.

We introduce for $f\ge 0$ the (discrete) entropy production $\mathcal{P}(f)$ and Fisher information $\mathcal{F}(f)$, respectively, by
\begin{align*}
  \mathcal{P}(f) = -\tau^{-1}\big(\mathcal{H}^\phi(\pi f|m_h) 
  - \mathcal{H}^\phi(f|m_h)\big), \quad
  \mathcal{F}(f) = \mathcal{E}(\phi'(f),f).
\end{align*}
The following results are proved in Section \ref{sec.proofs2}.

\begin{proposition}\label{prop.BE}
We assume that
\begin{itemize}
\item[(i)] There exists $C_P>0$ such that $0\le \mathcal{P}(\pi^n f) \le C_P\mathcal{F}(\pi^n f)$ for all $f\ge 0$ and $n\in\N$.
\item[(ii)] There exists $\lambda>0$ such that $\mathcal{F}(\pi^{n+1}f)-\mathcal{F}(\pi^nf)\le -\tau\lambda\mathcal{F}(\pi^n f)$ for all $f\ge 0$ and $n\in\N$.
\item[(iii)] $\lim_{n\to\infty}\mathcal{H}^\phi(\pi^nf|m_h)=0$.
\end{itemize}
Then for all $f\ge 0$, $n\in\N$, and $\tau<1/\lambda$,
$$
  \mathcal{H}^\phi(\pi^n f|m_h) 
  \le C_f e^{-\lambda n\tau}\mathcal{H}^\phi(f|m_h),
$$
where $C_f=C_P\mathcal{F}(f)/(\lambda\mathcal{H}^\phi(f|m_h))$.
\end{proposition}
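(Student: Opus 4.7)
The plan is to combine a geometric decay of the Fisher information (from assumption (ii)) with the telescoping identity that links the entropy production to the $\phi$-entropy, and then close everything via assumption (i) and the limit in (iii).

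First I would iterate assumption (ii) to obtain a purely geometric bound on the Fisher information along the chain:
\begin{equation*}
\mathcal{F}(\pi^n f) \le (1-\tau\lambda)^n \mathcal{F}(f) \quad \text{for all } n\in\N.
\end{equation*}
Since $\tau<1/\lambda$, the factor $1-\tau\lambda$ is strictly positive and strictly less than one, so the powers form a convergent geometric sequence. Combining with the upper bound in (i), this gives the pointwise decay $\mathcal{P}(\pi^n f) \le C_P (1-\tau\lambda)^n \mathcal{F}(f)$.

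Second, I would exploit the telescoping structure coming from the very definition of the entropy production,
\begin{equation*}
\mathcal{H}^\phi(\pi^{n+1} f\mid m_h) - \mathcal{H}^\phi(\pi^n f\mid m_h) = -\tau\,\mathcal{P}(\pi^n f).
\end{equation*}
Summing from $n=m$ to $n=N-1$ and letting $N\to\infty$, the left-hand side telescopes and the $N\to\infty$ term vanishes by assumption (iii); the nonnegativity of $\mathcal{P}$ from (i) ensures the tail sum is well-defined. This yields the identity
\begin{equation*}
\mathcal{H}^\phi(\pi^m f\mid m_h) = \tau \sum_{n=m}^{\infty} \mathcal{P}(\pi^n f).
\end{equation*}

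Third, I would plug in the bound from the first step to estimate the tail sum by a geometric series:
\begin{equation*}
\mathcal{H}^\phi(\pi^m f\mid m_h) \le \tau C_P \mathcal{F}(f) \sum_{n=m}^\infty (1-\tau\lambda)^n = \frac{C_P \mathcal{F}(f)}{\lambda}\,(1-\tau\lambda)^m.
\end{equation*}
Finally, using the elementary inequality $1-\tau\lambda \le e^{-\tau\lambda}$ and multiplying and dividing by $\mathcal{H}^\phi(f\mid m_h)$ rewrites the right-hand side in the desired form $C_f e^{-\lambda m\tau}\mathcal{H}^\phi(f\mid m_h)$ with the stated constant $C_f$.

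I do not expect a hard technical obstacle: (ii) does all of the real work by forcing a geometric rate, (i) transfers that rate from $\mathcal{F}$ to $\mathcal{P}$, and (iii) legitimizes the telescoping at infinity. The mildly subtle point, which I would flag, is that the assumption $\tau<1/\lambda$ is used exactly to keep $1-\tau\lambda\in(0,1)$ so that the geometric series converges and the factor is positive at each step; without it the iteration in the first step could produce an alternating or divergent sequence.
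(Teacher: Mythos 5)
Your argument is correct: the iteration of (ii) gives $\mathcal{F}(\pi^n f)\le(1-\tau\lambda)^n\mathcal{F}(f)$, the definition of $\mathcal{P}$ gives the exact telescoping $\mathcal{H}^\phi(\pi^{n+1}f|m_h)-\mathcal{H}^\phi(\pi^n f|m_h)=-\tau\mathcal{P}(\pi^n f)$, assumption (iii) kills the boundary term at infinity, and summing the geometric series yields $\mathcal{H}^\phi(\pi^m f|m_h)\le\lambda^{-1}C_P\mathcal{F}(f)(1-\tau\lambda)^m$, which is exactly the bound the paper obtains before applying $1-\tau\lambda\le e^{-\tau\lambda}$. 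The route differs from the paper's in how the three hypotheses are assembled: the paper first combines (i) and (ii) into an inequality between the increments of $\mathcal{F}$ and of $\mathcal{H}^\phi$, telescopes that two-sided comparison from $n$ to $k$, and lets $k\to\infty$ (using $\mathcal{F}(\pi^kf)\to0$ and (iii)) to extract the intermediate functional inequality $\lambda C_P^{-1}\mathcal{H}^\phi(\pi^n f|m_h)\le\mathcal{F}(\pi^n f)$ along the orbit, which is then combined with the geometric decay of $\mathcal{F}$. You instead telescope only the entropy, obtaining the identity $\mathcal{H}^\phi(\pi^m f|m_h)=\tau\sum_{n\ge m}\mathcal{P}(\pi^n f)$, and insert the termwise bound $\mathcal{P}(\pi^n f)\le C_P(1-\tau\lambda)^n\mathcal{F}(f)$. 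Your version is arguably more transparent (entropy as a tail sum of productions, no coupled telescoping), while the paper's version produces as a by-product the entropy--Fisher-information inequality along the trajectory, a discrete convex-Sobolev-type statement of independent interest; the constants and the use of $\tau<1/\lambda$ are identical in both. One small point, shared with the paper's formulation: the final rewriting with $C_f=C_P\mathcal{F}(f)/(\lambda\mathcal{H}^\phi(f|m_h))$ tacitly assumes $\mathcal{H}^\phi(f|m_h)>0$; if it vanishes, the decay estimate should be read in the form $\mathcal{H}^\phi(\pi^n f|m_h)\le\lambda^{-1}C_P\mathcal{F}(f)e^{-\lambda n\tau}$.
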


We impose a weaker condition compared to \cite[Prop.~1]{JuSc17}, but we obtain a weaker result. In particular, the prefactor $C_f$ depends on $f$ and may be very large. This prefactor equals one in \cite{JuSc17}, thus providing a contraction result, but assuming the lower bound $c_P\mathcal{F}(\pi^n f)\le\mathcal{P}(\pi^n f)$ for some $c_P>0$. Unfortunately, this bound does not hold in our situation. It is not surprising that our result is weaker than in \cite{JuSc17}, since we consider an explicit scheme, while an implicit scheme is studied in \cite{JuSc17}.

Using the same coupling approach as for the continuous-time Markov chain, we conclude the exponential decay for the discrete-time Markov chain.

\begin{theorem}[Exponential decay]\label{thm.ddecay}
Let Assumption 
(A3) hold and let the dis\-crete-time Markov chain with transition matrix \eqref{1.tk} be given. Then, for $\phi_\alpha$ defined in \eqref{3.phia}, there exists $C_f^\alpha>0$ such that for $f\ge 0$, $n\in\N$, and $\tau<1/\kappa_{\phi}$,
\begin{align*}
  \mathcal{H}^{\phi_\alpha}(\pi^n f|m_h) 
  \le C^\alpha_f e^{-\kappa_{\phi} n\tau}\mathcal{H}^{\phi_\alpha}(f|m_h),
\end{align*}
where $C^\alpha_f>0$ is as in Proposition \ref{prop.BE} with $\phi$ replaced by $\phi_\alpha$, and $\kappa_{\phi}$ is as in Theorem \ref{thm.decay}.
\end{theorem}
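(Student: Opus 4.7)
The plan is to apply Proposition \ref{prop.BE} to $\phi=\phi_\alpha$ with $\lambda=\kappa_\phi$ (the constant from Theorem \ref{thm.decay}) and verify its three hypotheses; the conclusion then follows with $C_f^\alpha=\mathcal{F}(f)/(\kappa_\phi\mathcal{H}^{\phi_\alpha}(f\,|\,m_h))$.

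Conditions (i) and (iii) should be routine. For (iii), the state space $\K_h$ is finite, the chain is irreducible, and the lazy choice $\mathcal{T}=2\max_i\sum_\gamma c(i,\gamma)$ guarantees $\pi(i,i)\ge 1/2$, so $\pi$ is aperiodic; hence $\pi^n f\to\sum_k f(k)m_h(k)$ and $\mathcal{H}^{\phi_\alpha}(\pi^n f\,|\,m_h)\to 0$. For (i), a direct computation gives $\pi f-f=\tau\L_h f$. Applying Jensen's inequality row-wise to $\pi$ and using the invariance of $m_h$, one obtains $\mathcal{H}^{\phi_\alpha}(\pi f\,|\,m_h)\le \mathcal{H}^{\phi_\alpha}(f\,|\,m_h)$, hence $\mathcal{P}(\pi^n f)\ge 0$. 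The tangent inequality $\phi_\alpha(\pi f(i))-\phi_\alpha(f(i))\ge\phi_\alpha'(f(i))(\pi f(i)-f(i))$ combined with the definition of the Dirichlet form then yields $\mathcal{P}(f)\le -\sum_{i\in\K_h}\phi_\alpha'(f(i))\,\L_h f(i)\,m_h(i)=\mathcal{F}(f)$, so $C_P=1$ is admissible.

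Condition (ii), $\mathcal{F}(\pi^{n+1}f)-\mathcal{F}(\pi^n f)\le-\tau\kappa_\phi\mathcal{F}(\pi^n f)$, is the main content; by iteration it suffices to prove $\mathcal{F}(\pi g)\le(1-\tau\kappa_\phi)\mathcal{F}(g)$ for every $g\ge 0$. The idea is to apply Conforti's coupling (Section \ref{sec.coupl}) exactly as in the proof of Theorem \ref{thm.decay}, but one step at a time: two copies of the discrete-time chain are coupled so that, whenever possible, they perform a synchronous move, and otherwise they jump to neighbouring states with maximal probability. The gaps $\kappa_\pm(i,j)$ from Assumption (A3) together with the $\kappa$-convexity of $V$ (Assumption (A2)) furnish a one-step contraction of the coupled moves, and the convexity of $\Phi$ promotes this into a bound on $\mathcal{F}(\pi g)$; the hypothesis $\tau<1/\kappa_\phi$ keeps $1-\tau\kappa_\phi>0$. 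This step is the main obstacle: Conforti's argument in \cite{Con22} is originally phrased as a control of the second time derivative of the entropy along a continuous-time semigroup, and translating it into a single-step estimate for the Fisher information of the explicit Euler chain requires careful bookkeeping of the lazy factor $1-\tau\sum_\gamma c(i,\gamma)$ in $\pi$, to ensure that the ``identity'' move it produces in the coupled chain does not disturb the cancellations leading to the rate $\kappa_\phi$ nor introduce an $O(\tau^2)$ correction that would erode it.
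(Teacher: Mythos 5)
Your overall strategy is the same as the paper's: verify hypotheses (i)--(iii) of Proposition \ref{prop.BE} with $\lambda=\kappa_\phi$ and invoke the coupling \eqref{4.coupl}. Your treatment of (iii) (finite state space, irreducibility, laziness giving aperiodicity) is correct, and your treatment of (i) is not only correct but simpler than the paper's: the tangent-line bound $\phi_\alpha(f(i))-\phi_\alpha(\pi f(i))\le -\phi_\alpha'(f(i))(\pi f(i)-f(i))$ together with $\pi f-f=\tau\L_h f$ gives $\mathcal{P}(f)\le \mathcal{E}(\phi_\alpha'(f),f)$ directly, whereas the paper exploits the specific identity \eqref{5.phia} for $\phi_\alpha$ and obtains $C_P=2/(\alpha\tau)$ in the normalization where $\mathcal{F}$ is built from the rescaled rates $p$ of \eqref{3.p}; your $C_P=1$ corresponds to the unrescaled rates $c$, and since condition (ii) and the product $C_P\mathcal{F}(f)$ are invariant under this rescaling, either convention works provided it is used consistently.

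The genuine gap is condition (ii), which is the heart of the theorem: you state the target one-step inequality $\mathcal{F}(\pi g)\le(1-\tau\kappa_\phi)\mathcal{F}(g)$, call its proof ``the main obstacle'', and do not carry it out, worrying that the lazy factor in $\pi$ might spoil the cancellations or introduce an $O(\tau^2)$ loss. In fact no Taylor expansion in $\tau$ is involved and the estimate is exact, which is precisely how the paper closes this step: write $\mathcal{F}(\pi g)=\tfrac12\sum_{(i,\delta)\in S}p(i,\delta)\,\Phi(\pi g(i),\pi g(\delta i))\,m_h(i)$, note that the rescaled coupling rates $\mathbf{p}=\mathbf{c}/\mathcal{T}$ from \eqref{4.coupl}, completed by placing the remaining (lazy) mass on the null pair $(e,e)$, form a probability measure on $G_i^*\times G_{\delta i}^*$ whose marginals reproduce $\pi$, so that $(\pi g(i),\pi g(\delta i))$ is exactly the barycenter of the pairs $(g(\gamma i),g(\bar\gamma\delta i))$; the convexity of $\Phi$ from (A1) then gives $\Phi(\pi g(i),\pi g(\delta i))\le\sum_{\gamma,\bar\gamma}\mathbf{p}(i,\delta i,\gamma,\bar\gamma)\,\Phi(g(\gamma i),g(\bar\gamma\delta i))$, the pair $(e,e)$ contributes nothing to the difference $\mathcal{F}(\pi g)-\mathcal{F}(g)$, and Lemma \ref{lem.ineq} (which is homogeneous in the rates, hence valid for $p=c/\mathcal{T}$ with constant $\kappa_\phi/\mathcal{T}$) yields $\mathcal{F}(\pi g)-\mathcal{F}(g)\le-\tau\kappa_\phi\mathcal{F}(g)$ with no erosion of the rate; the restriction $\tau<1/\kappa_\phi$ is only needed so that $1-\tau\kappa_\phi>0$ in the iteration of Proposition \ref{prop.BE}. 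Without this computation your argument is a plan for the decisive estimate rather than a proof of it, so as written the proposal is incomplete, though the missing step is exactly fillable along these lines.
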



\subsection{Exponential decay in the Wasserstein distance}

Analogously to the continuous-time situation, we obtain contraction in the Wasserstein distance up to some error term.

\begin{theorem}[Convergence in Wasserstein distance]\label{thm.dW}
Let the potential $V\in C^2(\R^d)$ be of additive form, $V(i)=\sum_{j=1}^d V_j(i_j)$ for $i\in\K_h$, satisfy Assumption (A2), and possess a Lip\-schitz continuous gradient. Furthermore, let $\nu$ and $\eta$ be two probability measures on $\K_h$, and consider the discrete-time Markov chain with transition matrix \eqref{1.tk}. Then it holds with the constant $\kappa>0$ from Assumption (A2) that
$$
  \mathcal{W}_2(\nu\pi^n,\eta\pi^n) \le e^{-\kappa n\tau}
  \mathcal{W}_2(\nu,\eta) + O(h^{1/2}) \quad\mbox{for all }n\in\N.
$$
Moreover, if the potential satisfies additionally Assumption (A3),
\begin{equation} \label{e.Wd}
  \mathcal{W}_{\dd, 1}(\nu\pi^n,\eta\pi^n) \le e^{-\kappa_1 n\tau}
  \mathcal{W}_{\dd,1} (\nu,\eta)\quad\mbox{for all }n\in\N,
\end{equation}
and
$$
  \mathcal{W}_1(\nu\pi^n,\eta\pi^n) \le \sqrt{d}e^{-\kappa_1 n\tau}
  \mathcal{W}_1(\nu,\eta)\quad\mbox{for all }n\in\N,
$$
where $\kappa_1>0$ is defined in \eqref{2.kappa1}.
\end{theorem}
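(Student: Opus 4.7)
My plan rests on the observation that $\pi = I + \tau \L_h$ with $\tau = 1/\mathcal{T}$; this follows directly from \eqref{1.tk} since the self-loop probability is $1-\sum_{\gamma}c(i,\gamma)/\mathcal{T}$, and the choice $\mathcal{T}=2\max_i\sum_\gamma c(i,\gamma)$ guarantees that any coupling generator $\bar{\L}_h$ on $\K_h\times\K_h$ whose off-diagonal rates are bounded by $\mathcal{T}$ lifts as $\bar\pi := I + \tau \bar{\L}_h$ to a bona fide transition matrix (the factor of two absorbs the worst case where the coupling is fully independent). Consequently, every one-step estimate of the form $\bar{\L}_h F \le -\kappa F$ translates mechanically to $\bar\pi F \le (1-\kappa\tau) F$, and the continuous-time arguments from Theorems \ref{thm.W} and \ref{thm.W1} are available modulo a $\tau$ factor.

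\textbf{Bounds in $\mathcal{W}_{\dd,1}$ and $\mathcal{W}_1$.} To prove \eqref{e.Wd} I would apply the path coupling method of \cite{BuDy97} and reduce to adjacent pairs $i,i'=i+he_j$ with $\dd(i,i')=h$. Re-using the synchronous/maximal coupling from the proof of Theorem \ref{thm.W1} --- synchronous jumps in directions $\gamma\notin\{\pm_j\}$ at the common rate $\min\{c(i,\gamma),c(i',\gamma)\}$, with the excess $\pm_j$-mass coupled toward coincidence --- one gets the neighbor contraction $\bar{\L}_h\dd(i,i')\le -\kappa_1\dd(i,i')$, hence
$$\bar\pi\,\dd(i,i') - \dd(i,i') \;=\; \tau\,\bar{\L}_h\,\dd(i,i') \;\le\; -\kappa_1\tau\,\dd(i,i').$$
Path coupling then propagates this neighbor estimate to arbitrary pairs, and iteration over $n$ steps yields $\mathcal{W}_{\dd,1}(\nu\pi^n,\eta\pi^n)\le (1-\kappa_1\tau)^n\,\mathcal{W}_{\dd,1}(\nu,\eta)\le e^{-\kappa_1 n\tau}\mathcal{W}_{\dd,1}(\nu,\eta)$. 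The Euclidean $\mathcal{W}_1$ bound then follows from the metric equivalence $|x-y|\le\dd(x,y)\le \sqrt d\,|x-y|$ on $\R^d$, which yields $\mathcal{W}_1\le\mathcal{W}_{\dd,1}\le\sqrt d\,\mathcal{W}_1$ on probability measures and produces the claimed $\sqrt d$-prefactor.

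\textbf{$\mathcal{W}_2$ under additivity and main obstacle.} For the $\mathcal{W}_2$ bound I would exploit that, under additivity, the rate $c(i,\pm_j)$ depends only on $i_j$, so I can build a coordinate-synchronized coupling in which both chains select the same moving coordinate at every step; each one-dimensional marginal then evolves as an independent lazy walk driven by $V_j$. For every such 1D chain I would replicate the continuous-time strategy of Theorem \ref{thm.W} via a monotone coupling (pairing the two points by order) and a Taylor expansion of the drift: strong convexity of $V_j$ together with the Lipschitz property of $\na V$ gives a one-step squared-distance contraction $\E[(Z_1-Z'_1)^2]\le (1-2\kappa\tau)(Z_0-Z'_0)^2 + O(h\tau)$. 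Summing the ensuing geometric series in $n$ yields $\mathcal{W}_2^2(\nu\pi^n,\eta\pi^n)\le e^{-2\kappa n\tau}\mathcal{W}_2^2(\nu,\eta) + O(h)$, and a square root produces the stated $O(h^{1/2})$ error. The main technical obstacle I anticipate is checking that the coupled kernel $\bar\pi$ is a genuine probability kernel for which the ``excess mass'' of the Conforti-type coupling falls on the diagonal (or at least on nearer-neighbor pairs), so that the continuous-time rate $\kappa_1$ lifts without loss; for the $\mathcal{W}_2$ estimate the delicate point is keeping the $O(h\tau)$ Taylor remainder uniform across sites and verifying that the coordinate-synchronized coupling stays compatible with the global rescaling by $\mathcal{T}$ so that its cumulative contribution collapses to $O(h)$ rather than $O(nh\tau)$.
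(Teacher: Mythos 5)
Your proposal is correct and follows essentially the same route as the paper, which proves Theorem \ref{thm.dW} simply by transferring the coupling arguments of Theorems \ref{thm.W} and \ref{thm.W1} to the discrete-time kernel via the rescaled rates $p=c/\mathcal{T}$, i.e.\ exactly your identity $\pi=I+\tau\L_h$ and its lifted coupling $\bar\pi=I+\tau\bar{\L}_h$ (whose admissibility is guaranteed by the factor $2$ in $\mathcal{T}$, as in the proof of Theorem \ref{thm.ddecay}). Your one-step contractions, path-coupling propagation, metric equivalence for the $\sqrt{d}$ prefactor, and the geometric-series treatment of the $O(h\tau)$ remainder reproduce the paper's intended argument.
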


The theorem is proved in the same way as Theorems \ref{thm.W} and \ref{thm.W1}.

\begin{remark}[Coarse Ricci curvature]\rm
If the transition kernel $\pi$ on $\K_h$ satisfies \eqref{e.Wd}, we say that the tripel $(\K_h,\pi,\dd)$ has a coarse Ricci curvature at least $\kappa_1\tau$ in the sense of Ollivier \cite{Oll09}. Furthermore, according to \cite{ELL17}, the decay \eqref{3.Wd1} implies that
$$
  \mathcal{W}_{\dd,1}(\nu,m_h) 
  \le \bigg(\frac{2\mathcal{H}^{\phi_1}(\nu|m_h)}{\kappa_1\tau(2-\kappa_1\tau)}
  \bigg)^{1/2},
$$
where $\mathcal{H}^{\phi_1}(\nu|m_h)$ denotes the relative entropy between two probability measures, which equals $\mathcal{H}^{\phi_1}(\rho|m_h)$ with $\rho=\dd\nu/\dd m_h$ if $\nu$ is absolutely continuous with respect to $m_h$, and $+\infty$ else. Theorem \ref{thm.decay} guarantees the validity of the modified logarithmic Sobolev inequality with constant $\kappa_\phi$. Peres and Tetaly have conjectured for Markov chains on discrete spaces, for which a coarse Ricci curvature in the sense of Ollivier holds, that a modified logarithmic Sobolev inequality with the same constant up to a constant factor holds; see \cite[Conjecture 3.1]{ELL17}. This conjecture has been recently disproven in \cite{Mue23}. However, by \cite[Theorem 1]{CMS25}, contraction in Wasserstein distance implies directly contraction in entropy with the same constant, i.e., $\mathcal{H}^{\phi_1}(\pi f|m_h)\le (1-\kappa)\mathcal{H}^{\phi_1}(f|m_h)$ provided an additional non-negative sectional curvature holds (recall that $\phi_1(x)=x\log x$ is the Boltzmann entropy). This result leads to a refinement of the standard modified logarithmic Sobolev inequality.
\qed\end{remark}

As a consequence of Theorem~\ref{thm.dW} and in complement to Theorem~\ref{thm.ddecay}, exponential convergence in entropy is obtained for additive potentials $V$ with improved rates by applying \cite[Theorem 1]{CPP09}.

\begin{corollary}
 Let the potential $V\in C^2(\R^d)$ be of additive form, $V(i)=\sum_{j=1}^d V_j(i_j)$ for $i\in\K_h$, satisfy Assumptions (A2) and (A3), and possess a Lip\-schitz continuous gradient. Let the dis\-crete-time Markov chain with transition matrix \eqref{1.tk} be given. Then, for all $f\ge 0$, $n\in \mathbb{N}$ it holds that
\begin{align*}
  \mathcal{H}^{\phi_1}(\pi^n f|m_h) 
  \le e^{-\kappa_{1} n \tau }\mathcal{H}^{\phi_1}(f|m_h),
\end{align*}
where $\kappa_1$ is defined in \eqref{2.kappa1}.
\end{corollary}


\section{Coupling method}\label{sec.coupling}\label{sec.coupl}

We introduce the coupling rates for Markov chains and present sufficient conditions to conclude the convex Sobolev inequality. These conditions were established by Conforti \cite{Con22}, and we recall them for the convenience of the reader. More precisely, we define a coupling of two Markov chains by coupling their rates. Recall that $G_i$ is the set of all possible moves of the Markov chain starting in $i\in\K_h$. We set $G_i^*=G_i\cup\{e\}$ for the set of all moves starting from $i$ and the null element $e$ (no move). 

\begin{definition}[Coupling rate]
Let $\L_h$ be the generator \eqref{2.Lh} with transition rates $c(i,\gamma)$. Given $i,\bar{\imath}\in\K_h$, we call the function $\mathbf{c}(i,\bar{\imath},\cdot,\cdot):G_i^*\times G_{\bar\imath}^*\to\R_+$ a {\em coupling rate} for $(i,\bar{\imath})$ if and only if
\begin{align*}
  \sum_{\bar\gamma\in G^*_{\bar{\imath}}}
  \mathbf{c}(i,\bar{\imath},\gamma,\bar\gamma)
  = c(i,\gamma) &\quad\mbox{for all }\gamma\in G_i, \\
  \sum_{\gamma\in G^*_{i}}\mathbf{c}(i,\bar{\imath},\gamma,\bar\gamma)
  = c(\bar{\imath},\bar\gamma) &\quad\mbox{for all }
  \bar\gamma\in G_{\bar{\imath}}.
\end{align*} 
\end{definition}

As in \cite{Con22}, we introduce the nonnegative function
$$
  f^\phi(i,\delta i) = \Phi(f(i),f(\delta i))
  = \big(\phi'(f(i))-\phi'(f(\delta i))\big)(f(i)-f(\delta i))
$$
for $i\in\K_h$, $\delta\in G_i$, and $f\ge 0$. Furthermore, let $S=\{(i,\delta)\in\K_h\times G:c(i,\delta)>0\}$ be the set of all combinations of states and moves with positive transition rate.

\begin{theorem}{\cite[Prop.~2.1]{Con22}}\label{thm.conforti}
Let Assumption (A1) hold. Let $c(i,\gamma i,\cdot,\cdot)$ be coupling rates and $m_h$ be the invariant measure associated to the generator $\L_h$.
\begin{itemize}
\item[(i)] If there exists $\kappa'\ge 0$ such that
\begin{equation}\label{4.kap}
  \frac12\sum_{\substack{(i,\delta)\in S \\ 
  \gamma\in G_i^*,\,\bar\gamma\in G^*_{\bar{\imath}}}}
  c(i,\gamma)\mathbf{c}(i,\delta i,\gamma,\bar\gamma)
  \big(f^\phi(\gamma i,\bar\gamma\delta i)-f^\phi(i,\delta i)\big)
  m_h(i) \le -\kappa'\mathcal{E}(\phi'(f),f)
\end{equation}
holds uniformly for $f>0$, the convex Sobolev inequality holds with constant $\kappa'$.
\item[(ii)] If in addition to \eqref{4.kap} there exists $\kappa''>0$ such that
$$
  \inf_{(i,\delta)\in S}\min\big\{\mathbf{c}(i,\delta i,\delta,e),
  \mathbf{c}(i,\delta i,e,\delta^{-1})\big\} \ge \kappa'',
$$
the modified logarithmic Sobolev inequality holds with $\kappa_1=\kappa'+2\kappa''$.
\item[(iii)] If in addition to \eqref{4.kap} there exists $\kappa'''>0$ such that
$$
  \inf_{(i,\delta)\in S}\sum_{\substack{\gamma\in G^*_i,\,
  \bar\gamma\in G^*_{\delta i} \\ \gamma i=\bar\gamma\delta i}}
  \mathbf{c}(i,\delta i,\gamma,\bar\gamma) \ge\kappa''',
$$
the discrete Beckner inequality holds for $1<\alpha\le 2$ with $\kappa_\alpha = \kappa' + (\alpha-1)\kappa'''$.
\end{itemize}
\end{theorem}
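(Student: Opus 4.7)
The plan is to carry out a coupling version of the Bakry--Emery argument, as in \cite{Con22}. Introduce the functional
\[
F(t) := \mathcal{E}(\phi'(S_t f), S_t f) = \frac{1}{2}\sum_{(i, \delta) \in S} c(i, \delta)\,f_t^\phi(i, \delta i)\,m_h(i), \qquad f_t := S_t f,
\]
where the second representation uses the reversibility of $m_h$ together with the symmetry $\Phi(a,b) = \Phi(b,a)$. Since $\frac{\dd}{\dd t}\mathcal{H}^\phi(S_tf|m_h) = -F(t)$ and $\mathcal{H}^\phi(S_tf|m_h)\to 0$ by irreducibility, the convex Sobolev inequality $\kappa'\mathcal{H}^\phi(f|m_h) \le \mathcal{E}(\phi'(f), f)$ follows immediately from the differential inequality $F'(t) \le -\kappa' F(t)$ via Gr\"onwall and integration over $[0, \infty)$. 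So the entire argument reduces to controlling $F'(t)$.

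For part (i), introduce the coupled semigroup $(P_s)_{s\ge 0}$ on $\K_h \times \K_h$ generated by the coupling rates $\mathbf{c}(i, \bar\imath, \cdot, \cdot)$, together with the function $\Psi_t: (i,\bar\imath) \mapsto \Phi(f_t(i), f_t(\bar\imath))$. By the marginal identities in the definition of a coupling rate, both marginals of $P_s$ coincide with $S_s$. Assumption (A1) renders $\Phi$ jointly convex on $(0, \infty)^2$, so Jensen's inequality applied to the law of the coupled chain started at $(i, \delta i)$ gives
\[
\Phi(S_s f_t(i), S_s f_t(\delta i)) \le (P_s \Psi_t)(i, \delta i) \qquad\text{for every } s\ge 0,
\]
with equality at $s=0$. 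Differentiating in $s$ at $0$ yields the pointwise bound $\frac{\dd}{\dd s} f_{t+s}^\phi(i, \delta i)\big|_{s=0} \le (\mathcal{L}_2 \Psi_t)(i, \delta i)$, where $\mathcal{L}_2$ denotes the coupled generator. Multiplying by $c(i, \delta) m_h(i)/2$ and summing over $(i, \delta) \in S$, the right-hand side becomes exactly the quantity on the left of \eqref{4.kap} applied to $f_t$; by hypothesis it is at most $-\kappa' F(t)$, which closes part (i).

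For part (ii), specialize to $\phi = \phi_1$. The pairs $(\gamma, \bar\gamma) = (\delta, e)$ and $(\gamma, \bar\gamma) = (e, \delta^{-1})$ in the coupling sum correspond to meeting moves after which the two copies of the chain sit at a common site, so the associated terms $f_t^\phi(\gamma i, \bar\gamma \delta i)$ vanish. Exploiting the specific form of $\Phi_1(a,b) = (\log a - \log b)(a-b)$, one refines the Jensen step of (i) so that the two meeting contributions enter as an additional additive contraction rate of $2\kappa''$, rather than being absorbed into $\kappa'$. For part (iii) the same recipe is applied with $\phi = \phi_\alpha$, $1 < \alpha \le 2$: pairs $(\gamma, \bar\gamma)$ with $\gamma i = \bar\gamma \delta i$ (coalescence moves) annihilate $f_t^\phi$, and a tailored Jensen-type identity for $\Phi_\alpha$ upgrades their aggregate rate $\ge \kappa'''$ into an additional contraction of $(\alpha-1)\kappa'''$.

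The main technical obstacle is the sharper-than-Jensen identity required in (ii) and (iii): the generic joint-convexity estimate used in (i) cannot on its own separate out the meeting or coalescence contributions, so one needs the right $\phi_\alpha$-specific convex-analytic inequality. Part (i), by contrast, is essentially bookkeeping together with the (automatic, since $\K_h$ is finite) interchange of $\frac{\dd}{\dd t}$ with a finite sum.
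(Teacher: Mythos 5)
The paper itself does not prove this theorem -- it is quoted from \cite[Prop.~2.1]{Con22}, and the only in-paper ingredient is the explicit coupling \eqref{4.coupl} used later -- so the relevant comparison is with Conforti's argument. Your part (i) is correct and is essentially that argument: the identity $-\frac{\dd}{\dd t}\mathcal{H}^\phi(S_tf|m_h)=\mathcal{E}(\phi'(S_tf),S_tf)=F(t)$, the marginal property of the semigroup generated by the coupling rates, joint convexity of $\Phi$ from (A1), and differentiating the Jensen inequality at $s=0$ give $F'(t)\le \frac12\sum c(i,\delta)\mathbf{c}(i,\delta i,\gamma,\bar\gamma)(f_t^\phi(\gamma i,\bar\gamma\delta i)-f_t^\phi(i,\delta i))m_h(i)$, after which \eqref{4.kap}, Gr\"onwall, and integration over $(0,\infty)$ yield the convex Sobolev inequality with constant $\kappa'$. (Your computation produces the weight $c(i,\delta)$, consistent with \eqref{4.aux}; the $c(i,\gamma)$ and $G^*_{\bar\imath}$ appearing in \eqref{4.kap} are evidently typos for $c(i,\delta)$ and $G^*_{\delta i}$, so your reading is the intended one. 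The differentiability issues you gloss over are harmless on a finite state space for the functions $\phi_\alpha$.)

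Parts (ii) and (iii), however, are not proved: you assert that ``one refines the Jensen step'' so that the meeting rates produce the additive improvements $2\kappa''$ and $(\alpha-1)\kappa'''$, and you yourself flag this refined inequality as the main obstacle -- but that inequality \emph{is} the content of statements (ii) and (iii), and it cannot be a routine strengthening of the argument in (i). The coalescence pairs (for which $f^\phi(\gamma i,\bar\gamma\delta i)=0$) are already part of the sum hypothesized in \eqref{4.kap}, so knowing in addition that their rates are bounded below by $\kappa''$ or $\kappa'''$ does not allow you to upgrade the differential inequality to $F'(t)\le-(\kappa'+2\kappa'')F(t)$ without double counting; indeed, in the very application in this paper the bound \eqref{4.kap} holds \emph{because} of those coalescence terms (in the proof of Lemma \ref{lem.ineq} the difference $I-J$ reduces exactly to the $\kappa_\pm$ contributions), so the total decay rate of $F$ is $\kappa'$ and nothing more. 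The improvement in \cite{Con22} instead exploits the specific structure of $\Phi_{\alpha}$ -- in effect, the nonnegative Jensen/Bregman defect discarded in step (i) is retained at the coalescence moves and converted, via $\phi_\alpha$-specific convexity inequalities, into additional entropy decay -- and this is precisely the work your sketch leaves as a black box. As written, only part (i) is established; the constants $\kappa'+2\kappa''$ and $\kappa'+(\alpha-1)\kappa'''$ in (ii) and (iii) remain unproven.
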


The proof of this theorem relies on the following construction of the coupling rates, which is also used in the proof of Theorem \ref{thm.decay} (see Figure~\ref{figure1} for an illustration of the coupling). We define for $i,i+he_j\in\K_h$ with $j=1,\ldots,n$,
\begin{equation}\label{4.coupl}
  \mathbf{c}(i,i+he_j,\gamma,\bar{\gamma})
  = \begin{cases} 
  \min\{c(i,\gamma),c(i+he_j,\bar{\gamma})\} 
  & \mbox{if }\gamma=\bar{\gamma}\in G, \\
  \max\{c(i+he_j,\bar{\gamma})-c(i,\bar{\gamma}),0\}
  & \mbox{if }\gamma=+_j, \bar{\gamma}\in G,\, 
  \bar{\gamma}\neq +_j,-_j, \\
  \max\{-c(i+he_j,\gamma)+c(i,\gamma),0\} 
  & \mbox{if }\gamma\in G,\, 
  \gamma\neq +_j,-_j,\, \bar{\gamma}=-_j, \\
  \kappa_+(i,j) 
  & \mbox{if } \gamma=+_j,\, \bar{\gamma}=e, \\
  \kappa_-(i+he_j,j) 
  & \mbox{if } \gamma=e,\, \bar{\gamma}=-_j, \\
  0 & \text{else,}
\end{cases}
\end{equation}
recalling definitions \eqref{3.kappa+} and \eqref{3.kappa-} of $\kappa_\pm(i,j)$. Assumption (A3) guarantees that $\kappa_\pm(i,j)>0$. The coupling ensures that the new states $\gamma i$ and $\bar\gamma(i+he_j)$ are at most neighboring states on the grid and that they are coupled with maximal probability, i.e.\ $\gamma i=\bar\gamma(i+he_j)$ with rate $\kappa_+(i,j)+\kappa_-(i,j)$. Analogously, we define
\begin{equation*}
  \mathbf{c}(i,i-he_j,\gamma,\bar\gamma) = \mathbf{c}(i-he_j,(i-he_j)+he_j,\bar\gamma,\gamma)\quad\mbox{for }i,i-he_j\in\K_h,\ j=1,\ldots,d.
\end{equation*}

\begin{figure}

\begin{tikzpicture}
\tikzstyle{every node}=[font=\tiny]
\coordinate[label=below:  ] (A) at (0,0);
\coordinate[label=below: ] (B) at (1,0);
\coordinate[label=below:$i$] (C) at (0,1);
\coordinate[label=below:$i+he_j$] (D) at (1,1);
\coordinate[label=left:] (F) at (-1,1);
\coordinate[label=right:] (H) at (2,1);
\coordinate[label=left:$i+he_k$] (J) at (0,2);
\coordinate[label=right:$i+h(e_j+e_k)$] (K) at (1,2);
\fill (A) circle (2pt);
\fill (B) circle (2pt);
\fill (C) circle (2pt);
\fill (D) circle (2pt);
\fill (F) circle (2pt);
\fill (H) circle (2pt);
\fill (J) circle (2pt);
\fill (K) circle (2pt);
\draw[->, red, very thick, dashed] (C) to node[left] {\ding{192}} (J);
\draw[->, blue, very thick] (D) to node[right] {\ding{193}} (K);
\draw[->, blue, very thick] (D) to node[above] {\ding{194}} (C);
\coordinate[label={[red]right: {\ding{192} $c(i, +_k)$}}] (M) at (-1.5,-0.5);
\coordinate[label={[blue]right: {\ding{193} $\min(c(i,+_k),c(i+he_j,+_k))$}} ] (N) at (-1.5,-0.8);
\coordinate[label={[blue]right: {\ding{194} $\max(c(i,+_k)-c(i+he_j,+_k),0)$}}] (O) at (-1.5,-1.1);

\coordinate[label=left: $i-he_k$] (A2) at (5,0);
\coordinate[label=right:$i+h(e_j-e_k)$] (B2) at (6,0);
\coordinate[label=left:$i$] (C2) at (5,1);
\coordinate[label=right:$i+he_j$] (D2) at (6,1);
\coordinate[label=left:] (F2) at (4,1);
\coordinate[label=right:] (H2) at (7,1);
\coordinate[label=above:] (J2) at (5,2);
\coordinate[label=above:] (K2) at (6,2);
\fill (A2) circle (2pt);
\fill (B2) circle (2pt);
\fill (C2) circle (2pt);
\fill (D2) circle (2pt);
\fill (F2) circle (2pt);
\fill (H2) circle (2pt);
\fill (J2) circle (2pt);
\fill (K2) circle (2pt);
\draw[->, red, very thick, dashed] (C2) to  node[left] {\ding{192}}(A2);
\draw[->, blue, very thick] (D2) to  node[right] {\ding{193}} (B2);
\draw[->, blue, very thick] (D2) to  node[above] {\ding{194}} (C2);

\coordinate[label={[red]right: {\ding{192} $c(i, -_k)$}}] (M) at (3.5,-0.5);
\coordinate[label={[blue]right: {\ding{193} $\min(c(i,-_k),c(i+he_j,-_k))$}} ] (N) at (3.5,-0.8);
\coordinate[label={[blue]right: {\ding{194} $\max(c(i,-_k)-c(i+he_j,-_k),0)$}}] (O) at (3.5,-1.1);

\coordinate[label=below: ] (A3) at (10,0);
\coordinate[label=below:] (B3) at (11,0);
\coordinate[label=above:$i$] (C3) at (10,1);
\coordinate[label=below:$i+he_j$] (D3) at (11,1);
\coordinate[label=below:$i-he_j$] (F3) at (9,1);
\coordinate[label=above:$i+2he_j$] (H3) at (12,1);
\coordinate[label=above:] (J3) at (10,2);
\coordinate[label=above:] (K3) at (11,2);
\fill (A3) circle (2pt);
\fill (B3) circle (2pt);
\fill (C3) circle (2pt);
\fill (D3) circle (2pt);
\fill (F3) circle (2pt);
\fill (H3) circle (2pt);
\fill (J3) circle (2pt);
\fill (K3) circle (2pt);
\draw[->, red, very thick, dashed] (C3) to node[above] {\ding{192}} (F3);
\draw[->, blue, very thick] (D3) to node[above] {\ding{193}} (C3);

\coordinate[label={[red]right: {\ding{192} $c(i, -_j)$}}] (M) at (8.5,-0.5);
\coordinate[label={[blue]right: {\ding{193} $\min(c(i,-_j),c(i+he_j,-_j))$}} ] (N) at (8.5,-0.8);

\tikzstyle{every node}=[font=\tiny]
\coordinate[label=below:  ] (A4) at (0,-4);
\coordinate[label=right:$i+h(e_j-e_k)$] (B4) at (1,-4);
\coordinate[label=left:$i$] (C4) at (0,-3);
\coordinate[label=below: ] (D4) at (1,-3);
\coordinate[label=below:$i+he_j$] (D4a) at (1.5,-3);
\coordinate[label=left: ] (F4) at (-1,-3);
\coordinate[label=right:$i+2he_j$] (H4) at (2,-3);
\coordinate[label=above: ] (J4) at (0,-2);
\coordinate[label=right:$i+h(e_j+e_k)$] (K4) at (1,-2);
\fill (A4) circle (2pt);
\fill (B4) circle (2pt);
\fill (C4) circle (2pt);
\fill (D4) circle (2pt);
\fill (F4) circle (2pt);
\fill (H4) circle (2pt);
\fill (J4) circle (2pt);
\fill (K4) circle (2pt);
\draw[->, red, very thick, dashed] (C4) to node[above] {\ding{192}} (D4);
\draw[->, blue, very thick] (D4) to node[above] {\ding{193}} (H4);
\draw[->, blue, very thick] (D4) to node[left] {\ding{194}} (B4);
\draw[->, blue, very thick] (D4) to node[left] {\ding{195}} (K4);
\draw[>= stealth, blue, very thick]  (D4) edge [out=80,in=30,distance=10mm]  node[above] {\ding{196}} (D4);

\coordinate[label={[red]right: {\ding{192} $c(i, +_j)$}}] (M) at (-1.5,-4.5);
\coordinate[label={[blue]right: {\ding{193} $\min(c(i,+_j),c(i+he_j,+_j))$} }] (N) at (-1.5,-4.8);
\coordinate[label={[blue]right: {\ding{194} $\max(c(i,+_k)-c(i+he_j,+_k),0)$}}] (O) at (-1.5,-5.1);
\coordinate[label={[blue]right: {\ding{195} $\max(c(i,-_k)-c(i+he_j,-_k),0)$}}] (O) at (-1.5,-5.4);
\coordinate[label={[blue]right: {\ding{196} $\kappa_+(i,j)$}}] (O) at (-1.5,-5.7);

\coordinate[label=left:   ] (A5) at (5,-4);
\coordinate[label=below: ] (B5) at (6,-4);
\coordinate[label=below:$i$] (C5) at (5,-3);
\coordinate[label=below:$i+he_j$] (D5) at (6,-3);
\coordinate[label=left:] (F5) at (4,-3);
\coordinate[label=above:$i+2he_j$] (H5) at (7,-3);
\coordinate[label=left:  ] (J5) at (5,-2);
\coordinate[label=above: ] (K5) at (6,-2);
\fill (A5) circle (2pt);
\fill (B5) circle (2pt);
\fill (C5) circle (2pt);
\fill (D5) circle (2pt);
\fill (F5) circle (2pt);
\fill (H5) circle (2pt);
\fill (J5) circle (2pt);
\fill (K5) circle (2pt);
\draw[->, blue, very thick] (D5) to  node[above] {\ding{193}} (C5);
\draw[>= stealth, red, very thick, dashed]  (C5) edge [out=240,in=120,distance=10mm]  node[left] {\ding{192}} (C5);

\coordinate[label={[red]right: {\ding{192} $c(i, e)$}}] (M) at (3.5,-4.5);
\coordinate[label={[blue]right: {\ding{193} $\kappa_-(i+he_j,j)$}} ] (N) at (3.5,-4.8);

\end{tikzpicture}
\caption{{\footnotesize Coupling for the jump rates of the states $i$ and $i+he_j$. For the five possible jumps for state $i$ illustrated by red dashed lines, the corresponding coupled jumps for state $i+he_j$ are given by blue solid lines.}} \label{figure1}
\end{figure}

We remark that if the potential has an additive structure, i.e.\ $V(i)=\sum_{j=1}^d V_j(i_j)$, coupling \eqref{4.coupl} simplifies to
\begin{equation*}
  \mathbf{c}(i,i+he_j,\gamma,\bar{\gamma})
  = \begin{cases} 
  \min\{c(i,\gamma),c(i+he_j,\bar{\gamma})\} 
  & \mbox{if } \gamma=\bar{\gamma}\in G, \\
  c(i,+_j)-c(i+he_j,+_j) & \mbox{if } \gamma=+_j, \bar{\gamma}=e, \\
  c(i+he_j,-_j)-c(i,-_j) & \mbox{if } \gamma=e, \bar{\gamma}=-_j, \\
  0 & \text{else.}
\end{cases}
\end{equation*}


\section{Proofs for continuous-time Markov chains}\label{sec.proofs}

\subsection{Exponential decay in $\phi$-entropy}

In this subsection, we prove Theorem \ref{thm.decay}. It is sufficient to verify the conditions of Theorem \ref{thm.conforti}. The most delicate condition is the first one, whose validity is proved in the following lemma.

\begin{lemma}[Condition (i)]\label{lem.ineq}
Let Assumptions (A2) and (A3) hold and let $f\ge 0$. Then, for $\kappa_\phi$ as given in Theorem \ref{thm.decay}, the following inequality holds:
\begin{equation}\label{4.aux}
  \sum_{\substack{(i,\delta)\in S \\ \gamma\in G^*_i,\, 
  \bar{\gamma}\in G^*_{\delta i}}} c(i,\delta) 
  \mathbf{c}(i,\delta i, \gamma, \bar{\gamma})
  \big(f^{\phi}(\gamma i,\bar{\gamma}\delta i)
  -f^{\phi}(i,\delta i)\big) m_h(i)
  \le -\kappa_{\phi} \sum_{(i,\delta)\in S}
  c(i,\delta)f^{\phi}(i,\delta i)m_h(i).
\end{equation}
\end{lemma}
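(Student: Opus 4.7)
The plan is to verify inequality \eqref{4.aux} by expanding the left-hand side via the five cases of the coupling rates \eqref{4.coupl} and showing that the two ``coalescing'' entries alone already produce the desired bound, while the ``synchronous'' and ``crossed'' entries contribute a non-positive remainder.

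Fix an initial bond $(i,+_j)\in S$; the case $\delta=-_j$ is symmetric. The coalescing entries $(\gamma,\bar\gamma)=(+_j,e)$ and $(e,-_j)$ send the two coupled chains to the same site (namely $i+he_j$ and $i$ respectively), so $f^\phi(\gamma i,\bar\gamma\delta i)=\Phi(f(z),f(z))=0$. Their contribution to the left-hand side is therefore
\[
  -c(i,+_j)\bigl[\kappa_+(i,j)+\kappa_-(i+he_j,j)\bigr]f^\phi(i,i+he_j)\,m_h(i).
\]
The definition of $\kappa_\phi$ in Theorem~\ref{thm.decay} gives $\kappa_+(i,j)\geq\kappa_\phi/2$ and $\kappa_-(i+he_j,j)\geq\kappa_\phi/2$, so this is bounded above by $-\kappa_\phi c(i,+_j)f^\phi(i,i+he_j)m_h(i)$. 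Summing over $(i,+_j)\in S$ and adding the analogous $(i,-_j)$ bonds reproduces exactly the right-hand side of \eqref{4.aux}.

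It therefore suffices to show that the synchronous ($\gamma=\bar\gamma\in G$) and crossed ($\gamma=+_j$, $\bar\gamma\in G\setminus\{\pm_j\}$, or $\gamma\in G\setminus\{\pm_j\}$, $\bar\gamma=-_j$) entries together contribute a non-positive total. Each such entry sends the bond $(i,i+he_j)$ to another neighboring pair on the grid: a parallel translation by $\pm he_\ell$ in the synchronous case, or a rotation into a bond of direction $\pm_\ell$ in the crossed cases. I regroup the ``$+f^\phi$-at-new-bond'' terms by a change of summation variable, applying the path-independence identity \eqref{2.path} together with the detailed balance $c(i,+_j)m_h(i)=c(i+he_j,-_j)m_h(i+he_j)$, so that the weight in front of each new-bond term is realigned with the natural weight carried by that bond in another summand of the left-hand side. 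The residual combinations of $\Phi$-values, seated on the four vertices of elementary rectangles of the grid, are then controlled by the convexity of $\Phi$ from Assumption~(A1), which produces the required non-positivity.

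The main obstacle is the bookkeeping in this last step. Per initial bond direction there are many synchronous and crossed entries, and each must be matched, via path-independence and detailed balance, with a partner elsewhere so that the surviving $\Phi$-combinations are sign-definite. Verifying that the precise $\min$/$\max$ structure of \eqref{4.coupl} is exactly what the matching requires — so that no positive residue leaks into the contraction already extracted from the coalescing entries — is the technical heart of the argument.
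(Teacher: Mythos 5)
Your first step is sound and consistent with what the full computation gives: the coalescing entries $(\gamma,\bar\gamma)=(+_j,e)$ and $(e,-_j)$ contribute exactly $-c(i,+_j)\bigl[\kappa_+(i,j)+\kappa_-(i+he_j,j)\bigr]f^\phi(i,i+he_j)m_h(i)$ per bond, and since $\kappa_\pm\ge\kappa_\phi/2$ this alone reproduces the right-hand side of \eqref{4.aux}. Moreover, your reduction is in principle viable, because the synchronous and crossed entries do sum to a non-positive total --- in fact they sum to exactly zero once the directed bonds $(i,+_j)$ and $(i+he_j,-_j)$ are paired via detailed balance.

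The genuine gap is that this last fact is precisely the technical heart of the lemma, and you neither prove it nor point to the right mechanism. The cancellation is an exact algebraic identity, not a convexity estimate: after re-indexing, the new-bond weights created at $(i,i+he_j)$ by the synchronous and crossed moves add up to $\sum_\ell\bigl(c(i,+_\ell)+c(i,-_\ell)\bigr)-\kappa_-(i+he_j,j)$ (times $c(i,+_j)m_h(i)$), the old-bond weights add up to $\sum_\ell\bigl(c(i,+_\ell)+c(i,-_\ell)\bigr)+\kappa_-(i+he_j,j)$, and the leftover $\mp\kappa_-(i+he_j,j)$ is absorbed by the mirror bond through $c(i,+_j)m_h(i)=c(i+he_j,-_j)m_h(i+he_j)$. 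Carrying this out requires the path-independence \eqref{2.path} in the form $c(i,+_j)\max\{-\na_{+j}c(i,\pm_\ell),0\}=c(i,\pm_\ell)\max\{-\na_{\pm\ell}c(i,+_j),0\}$, the invariance identity $\sum_\ell\bigl(c(i-he_\ell,+_\ell)m_h(i-he_\ell)+c(i+he_\ell,-_\ell)m_h(i+he_\ell)\bigr)=\sum_\ell\bigl(c(i,+_\ell)+c(i,-_\ell)\bigr)m_h(i)$, and Assumption (A3) to resolve the minima such as $\min\{c(i-he_j,+_j),c(i,+_j)\}=c(i,+_j)$ in the synchronous $\pm_j$ terms. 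Invoking ``convexity of $\Phi$ from Assumption (A1)'' cannot close this step: (A1) is not among the hypotheses of the lemma (the proof only uses $f^\phi\ge 0$, i.e.\ monotonicity of $\phi'$, at the very end), and since the remainder is identically zero, any convexity inequality you write down on elementary rectangles would have to degenerate to an equality in every configuration --- which is exactly the bookkeeping you have deferred. As written, the proposal asserts rather than establishes the non-positivity of the synchronous and crossed contributions, so the proof is incomplete at its decisive point.
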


Recall that $S$ is the set of pairs $(i,\delta)\in\K_g\times G$ with positive transition rate $c(i,\delta)>0$.

\begin{proof}
Inequality \eqref{4.aux} is proved in \cite{Con22} for a slightly different model. We adapt the proof to the present situation. The idea is to split the left-hand side of \eqref{4.aux} into two parts and to reformulate both parts separately:
\begin{align}\label{4.IJ}
  &\sum_{\substack{(i,\delta)\in S \\ \gamma\in G^*_i,\, 
  \bar{\gamma}\in G^*_{\delta i}}} c(i,\delta) 
  \mathbf{c}(i,\delta i, \gamma, \bar{\gamma})
  \big(f^{\phi}(\gamma i,\bar{\gamma}\delta i)
  -f^{\phi}(i,\delta i)\big) m_h(i) = I-J, \quad\mbox{where} \\
  & I := \sum_{\substack{(i,\delta)\in S \\ \gamma\in G^*_i,\, 
  \bar{\gamma}\in G^*_{\delta i}}}c(i,\delta) 
  \mathbf{c}(i,\delta i, \gamma, \bar{\gamma})
  f^{\phi}(\gamma i,\bar{\gamma}\delta i)m_h(i), \nonumber \\
  & J := \sum_{\substack{(i,\delta)\in S \\ \gamma\in G^*_i,\, 
  \bar{\gamma}\in G^*_{\delta i}}}c(i,\delta) 
  \mathbf{c}(i,\delta i, \gamma, \bar{\gamma})
  f^{\phi}(i,\delta i)m_h(i). \nonumber 
\end{align}
To simplify the notation, we write $\na_\delta c(i,\gamma) 
= c(\delta i,\gamma)-c(i,\gamma)$ in the following. 

{\em Step 1: Reformulation of $J$.} We insert the coupling rates \eqref{4.coupl} into the expression for $J$ and observe that we have always two possibilities to move ($\pm_j$ and $\pm_\ell$). This leads to 
\begin{align*}
  J &= \sum_{i\in\K_h,\,j=1,\ldots,d}c(i,+_j)A^+(i,j)
  f^\phi(i,i+he_j)m_h(i) \\
  &\phantom{xx}+ \sum_{i\in\K_h,\,j=1,\ldots,d}c(i,-_j)A^-(i,j)
  f^\phi(i,i-he_j)m_h(i), \quad\mbox{where} \\
  A^\pm(i,j) &= \sum_{\ell=1}^d\big(\min\{c(i,+_\ell),c(i\pm he_j,+_\ell)\}
  + \min\{c(i,-_\ell),c(i\pm he_j,-_\ell)\}\big) \\
  &\phantom{xx}+ \sum_{\ell=1,\,\ell\neq j}^d\big(\max\{\na_{\pm j}
  c(i,+_\ell),0\} + \max\{\na_{\pm j}c(i,-_\ell,0)\}\big) \\
  &\phantom{xx}+ \sum_{\ell=1,\,\ell\neq j}^d\big(\max\{-\na_{\pm j}
  c(i,+_\ell),0\} + \max\{-\na_{\pm j}c(i,-_\ell),0\}\big) \\
  &\phantom{xx}+ \kappa_\pm(i,j) + \kappa_\mp(i\pm he_j,j).
\end{align*}
We insert into $A^+(i,j)$ definition \eqref{3.kappa+} for $\kappa_\pm$, namely
\begin{align*}
  \kappa_+(i,j) &= c(i,+_j) - c(i+he_j,+_j) \\
  &\phantom{xx}
  - \sum_{\ell=1,\ell\neq j}^d\big(\max\{\na_{+j}c(i,+_\ell),0\}
  + \max\{\na_{+j}c(i,-_\ell),0\}\big), \\
  \kappa_-(i+he_j,j) &= c(i+he_j,-_j) - c(i,-_j) \\
  &\phantom{xx}
  - \sum_{\ell=1,\ell\neq j}^d\big(\max\{-\na_{+j}c(i,+_\ell),0\}
  + \max\{-\na_{+j}c(i,-_\ell),0\}\big),
\end{align*}
which cancels the terms in $A^+(i,j)$ involving the maximum, and use the identity $\min\{a,b\}=a-\max\{a-b,0\}$ for $a,b\in\R$ to find that
\begin{align*}
  A^+(i,j) & = \sum_{\ell=1}^d \big(\min(c(i,+_\ell),c(i+he_j,+_\ell))
  + \min(c(i,-_\ell),c(i+he_j,-_\ell))\big) \\
  &\phantom{xx}+ c(i,+_j)-c(i+he_j,+_j)
  - c(i,-_j)+c(i+he_j, -_j) \\
  &= \sum_{\ell=1}^d \big(c(i,+_\ell) 
  - \max\{0,-\nabla_{+j}c(i,+_\ell)\} 
  + c(i,-_\ell) - \max\{0,-\nabla_{+j}c(i,-_\ell)\}\big) \\ 
  &\phantom{xx}+ c(i,+_j)-c(i+he_j,+_j) - c(i,-_j)
  + c(i+he_j, -_j) \\
  &= \sum_{\ell=1}^d (c(i,+_\ell)+c(i,-_\ell))  + \kappa_-(i+he_j,j).
\end{align*}
Proceeding in a similar way, we obtain
$$
  A^-(i,j) = \sum_{\ell=1}^d\big(c(i,+_\ell) + c(i,-_\ell)\big)
  + \kappa_+(i-he_j,j).
$$
This shows that
\begin{align}
  J &= \sum_{i\in\K_h,\,j=1,\ldots,d}c(i,+_j)\bigg(\sum_{\ell=1}^d
  \big(c(i,+_\ell) + c(i,-_\ell)\big) + \kappa_-(i+he_j,j)\bigg)
  f^\phi(i,i+he_j)m_h(i) \nonumber \\
  &\phantom{xx}+ \sum_{i\in\K_h,\,j=1,\ldots,d}c(i,-_j)
  \bigg(\sum_{\ell=1}^d
  \big(c(i,+_\ell) + c(i,-_\ell)\big) + \kappa_+(i-he_j,j)\bigg)
  f^\phi(i,i-he_j)m_h(i). \label{4.J} 
\end{align}

{\em Step 2: Reformulation of $I$.}
Setting $B_{(i,\delta,\gamma,\bar\gamma)}:=c(i,\delta)\mathbf{c}(i,\delta i,\gamma,\bar\gamma)f^\phi(\gamma i,\bar\gamma\delta i)m_h(i)$, we write \begin{align*}
  I &= \sum_{\substack{(i,\delta)\in S \\ 
  \gamma,\bar\gamma\in G^*}}B_{(i,\delta,\gamma,\bar\gamma)}
  = I_1+I_2+I_3+I_4, \quad\mbox{where} \\
  I_1 &= \sum_{\substack{i\in\K_h \\ 
  j=1,\ldots,d}}\sum_{\ell=1,\,\ell\neq j}^d\big(
  B_{(i,+_j,+_\ell,+_\ell)} + B_{(i,+_j,-_\ell,-_\ell)}
  + B_{(i,-_j,+_\ell,+_\ell)} + B_{(i,-_j,-_\ell,-_\ell)}\big), \\
  I_2 &= \sum_{\substack{i\in\K_h \\ j=1,\ldots,d}}\big(
  B_{(i,+_j,+_j,+_j)} + B_{(i,-_j,+_j,+_j)}
  + B_{(i,+_j,-_j,-_j)} + B_{(i,-_j,-_j,-_j)}\big), \\
  I_3 &= \sum_{\substack{i\in\K_h \\ j=1,\ldots,d}}
  \sum_{\ell=1,\,\ell\neq j}^d\big(
  B_{(i,+_j,+_j,+_\ell)} + B_{(i,+_j,+j,-_\ell)}
  + B_{(i,+_j,+_\ell,-_j)} + B_{(i,+_j,-_\ell,-_j)}\big), \\
  I_4 &= \sum_{\substack{i\in\K_h \\ j=1,\ldots,d}}
  \sum_{\ell=1,\,\ell\neq j}^d\big(
  B_{(i,-_j,+_\ell,+_j)} + B_{(i,-_j,-_\ell,+_j)}
  + B_{(i,-_j,-_j,+_\ell)} + B_{(i,-_j,-_j,-_\ell)}\big). \\
\end{align*}
The term $I_1$ is reformulated by inserting the coupling rates and performing the shift $i\mapsto i\pm he_j$:
\begin{align*}
  I_1 &= \sum_{\substack{i\in\K_h \\ j=1,\ldots,d}}
  \sum_{\ell=1,\,\ell\neq j}^d\Big[\Big(c(i-he_\ell,+_j) \min\{c(i-he_\ell,+_\ell),c(i+he_j-he_\ell,+_\ell)\}m_h(i-he_\ell) \\ 
  &\phantom{xx}+c(i+he_\ell,+_j) \min\{c(i+he_\ell,-_\ell),c(i+he_j+he_\ell,-_\ell)\}m_j(i+he_\ell)
  \Big)f^{\phi} (i,i+he_j) \\ 
  &\phantom{xx} +\Big(c(i-he_\ell,-_j) \min\{c(i-he_\ell,+_\ell),c(i-he_j-he_\ell,+_\ell)\}m_h(i-he_\ell) \\
  &\phantom{xx} +c(i+he_\ell,-_j) \min\{c(i+he_\ell,-_\ell),c(i-he_j+he_\ell,-_\ell)\}m_h(i+he_\ell)
  \Big)f^{\phi}(i,i-he_j)\Big].
\end{align*}
Assumption (A3) implies that $c(i,\pm_j)>c(i\pm he_j,\pm_j)$. Hence,
\begin{align*}
  \min\{c(i-he_j,+_j),c(i,+_j)\} &= c(i,+_j), \\
  \min\{c(i+he_j,-_j),c(i+2he_j,-_j)\} &= c(i+he_j,-_j), \\
  \min\{c(i-he_j,+_j),c(i-2he_j,+_j)\} &= c(i-he_j,+_j), \\
  \min\{c(i+he_j,-_j),c(i,-_j)\} &= c(i,-_j).
\end{align*}
Then, inserting the coupling rates into $I_2$, we find that
\begin{align*}
  I_2 &= \sum_{\substack{i\in\mathcal{K}_h \\ j=1,\ldots, d}}
  \Big[\Big(c(i-he_j,+_j) \min\{c(i-he_j,+_j),c(i,+_j)\}m_h(i-he_j) \\ 
  &\phantom{xx} + c(i+he_j,+_j) 
  \min\{c(i+he_j,-_j),c(i+2he_j,-_j)\}m_h(i+he_j)\Big)f^\phi(i,i+he_j) \\
  &\phantom{xx} + \Big(c(i-he_j,-_j) \min\{c(i-he_j,+_j),c(i-2he_j,+_j)\}m_h(i-he_j) \\ 
  &\phantom{xx} + c(i+he_j,-_j) \min\{c(i+he_j,-_j),c(i,-_j)\}m_h(i+he_j)
  \Big)f^{\phi}(i,i-he_j)\Big] \\ 
  &= \sum_{\substack{i\in\mathcal{K}_h \\ j=1,\ldots, d}} \Big[\Big( c(i-he_j,+_j) c(i,+_j)m_h(i-he_j) 
  + c(i+he_j,+_j) c(i+he_j,-_j)m_h(i+he_j)\Big) \\
  &\phantom{xx}\times f^{\phi} (i,i+he_j) \\ 
  &\phantom{xx} + \Big(c(i-he_j,-_j) c(i-he_j,+_j)m_h(i-he_j) 
  + c(i+he_j,-_j) c(i,-_j)m_h(i+he_j)\Big) \\
  &\phantom{xx}\times f^{\phi}(i,i-he_j)\Big] \\ 
  &= \sum_{\substack{i\in\mathcal{K}_h \\ j=1,\ldots, d}} \Big[\Big( c(i-he_j,+_j) c(i,+_j)m_h(i-he_j) + c(i,+_j) c(i+he_j,-_j)m_h(i+he_j)
  \Big) \\
  &\phantom{xx}\times f^{\phi} (i,i+he_j) \\ 
  &\phantom{xx} + \Big(c(i,-_j) c(i-he_j,+_j)m_h(i-he_j)
  + c(i+he_j,-_j) c(i,-_j)m_h(i+he_j)\Big) \\
  &\phantom{xx}\times f^{\phi}(i,i-he_j) \\ 
  &\phantom{xx} - \na_{-_j}c(i,+_j) c(i,-_j)m_h(i)f^{\phi}(i,i-he_j)
  - \na_{+_j} c(i,-_j) c(i,+_j)m_h(i)f^{\phi}(i,i+he_j)\Big],
\end{align*}
where in the last step, we split the second and third terms and performed the shifts $i\mapsto i-he_j$ and $i\mapsto i+he_j$ to these terms, respectively. Similarly, we insert the coupling rates, perform the shifts $i\mapsto i\pm he_j$, and exchange the variables $j$ and $\ell$, leading to
\begin{align*}
  I_3 & = \sum_{\substack{i\in\mathcal{K}_h \\ j=1,\ldots, d}} \sum_{\substack{\ell=1 \\ \ell \neq j}}^d \Big(c(i-he_\ell,+_\ell)
  \max\{-\nabla_{-_\ell} c(i,+_j),0\}m_h(i-he_\ell)f^{\phi}(i,i+he_j) \\ 
  &\phantom{xx} + c(i-he_\ell,+_\ell) 
  \max\{-\nabla_{-_l}c(i,-_j),0\}m_h(i-he_\ell) f^{\phi}(i,i-he_j) \\ 
  &\phantom{xx} + c(i,+_\ell)\max\{-\nabla_{+_\ell}c(i,+_j),0\} 
  m_h(i) f^{\phi}(i,i+he_j) \\ 
  &\phantom{xx} + c(i,+_\ell) \max\{-\nabla_{+_\ell}c(i,-_j),0\} 
  m_h(i) f^{\phi}(i,i-he_j)\Big), \\
  I_4 &= \sum_{\substack{i\in\mathcal{K}_h \\ j=1,\ldots, d}} \sum_{\substack{\ell=1 \\ \ell \neq j}}^d \Big(c(i,-_\ell) \max\{-\nabla_{-_\ell}c(i,+_j),\})m_h(i)f^{\phi}(i,i+he_j) \\ 
  &\phantom{xx} + c(i,-_\ell) 
  \max\{-\nabla_{-_\ell}c(i,-_j),0\} m_h(i) f^{\phi}(i,i-he_j) \\ 
  &\phantom{xx} + c(i+he_\ell,-_\ell) 
  \max\{-\nabla_{+_\ell}c(i,+_j),0\} m_h(i+he_\ell) f^{\phi}(i,i+he_j) \\
  &\phantom{xx} + c(i+he_\ell,-_\ell) 
  \max\{-\nabla_{+_\ell}c(i,-_j),0\} m_h(i+he_\ell) f^{\phi}(i,i-he_j) 
  \Big).
\end{align*}
We collect first all terms with factor $f^\phi(i,i+he_j)$ appearing in $I_1$, $I_3$, and $I_4$ and reformulate them by observing that, by \eqref{2.path}, we have
\begin{align*}
  c(i,+_j)\max\{-\na_{+j}c(i,\pm_\ell),0\} 
  = c(i,\pm_\ell)\max\{-\na_{\pm\ell}c(i,+_j),0\}.
\end{align*}
This yields
\begin{align}\label{4.fplus}
  & \sum_{\substack{i\in\mathcal{K}_h \\ j=1,\ldots, d}} \sum_{\substack{\ell=1 \\ \ell \neq j}}^d \Big(c(i-he_\ell,+_j)\min\{c(i-he_\ell,+_\ell),
  c(i+he_j-he_\ell,+_\ell)\}m_h(i-he_\ell) \\
  &\phantom{xx}+ c(i+he_\ell,+_j)
  \min\{c(i+he_\ell,-_\ell), c(i+he_j+he_\ell,-_\ell)\}m_h(i+he_\ell) 
  \nonumber \\
  &\phantom{xx} + c(i-he_\ell,+_\ell)
  \max\{-\nabla_{-_l}c(i,+_j),0\} m(i-he_\ell) \nonumber \\
  &\phantom{xx} + c(i+he_\ell,-_\ell)
  \max\{-\nabla_{+_\ell}c(i,+_j),0\} m_h(i+he_\ell) \nonumber \\
  &\phantom{xx} + c(i,+_\ell)\max\{-\nabla_{+_\ell}c(i,+_j),0\} m_h(i)
  + c(i,-_\ell)\max\{-\nabla_{-_\ell}c(i,+_j),0\}m_h(i)
  \Big) \nonumber \\
  &\phantom{xx}\times f^{\phi}(i,i+he_j) \nonumber \\ 
  &= \sum_{\substack{i\in\mathcal{K}_h \\ j=1,\ldots, d}} \sum_{\substack{\ell=1 \\ \ell \neq j}}^d \Big( 
  c(i,+_j) c(i-he_\ell,+_\ell)m_h(i-he_\ell) 
  + c(i+he_\ell,-_\ell)c(i,+_j)m_h(i+he_\ell) \nonumber \\ 
  &\phantom{xx} + c(i,+_j)\max\{-\nabla_{+_j}c(i,+_\ell),\} m_h(i)
  + c(i,+_j)\max\{-\nabla_{+_j}c(i,-_\ell),0\} m_h(i)
  \Big) \nonumber \\
  &\phantom{xx}\times f^{\phi}(i,i+he_j). \nonumber 
\end{align}
Similarly, by collecting the terms with factor $f^\phi(i,i-he_j)$ in $I_1$, $I_3$, and $I_4$, we arrive at
\begin{align}\label{4.fminus}
  &\sum_{\substack{i\in\mathcal{K}_h \\ j=1,\ldots, d}} \sum_{\substack{\ell=1 \\ \ell \neq j}}^d \Big( 
  c(i,-_j) c(i-he_\ell,+_\ell)m_h(i-he_\ell) 
  + c(i+he_\ell,-_\ell)c(i,-_j)m_h(i+he_\ell) \\ 
  &\phantom{xx} + c(i,-_j)\max\{-\nabla_{-_j}c(i,+_\ell),0\} m_h(i)
  + c(i,-_j)\max\{-\nabla_{-_j}c(i,-_\ell),0\} m_h(i) \Big) \nonumber \\ &\phantom{xx}\times f^{\phi}(i,i-he_j)). \nonumber 
\end{align}
We insert $I_2$ and \eqref{4.fplus}--\eqref{4.fminus} into $I$, giving
\begin{align*}
  I &= \sum_{\substack{i\in\mathcal{K}_h \\ j=1,\ldots, d}}
  \big(c(i,+_j)f^\phi(i,i+he_j)E^+ +  c(i,-_j)f^\phi(i,i-he_j)E^-\big),
  \quad\mbox{where} \\
  E^\pm &= \sum_{\ell=1,\,\ell\neq j}^d
  \big[c(i-he_\ell,+_\ell)m_h(i-he_\ell) 
  + c(i+he_\ell,-_\ell)m(i+he_\ell)\big] \\
  &\phantom{xx}+ c(i-he_j,+_j)m_h(i-he_j) + c(i+he_j,-_j)m_h(i+he_j) 
  - \na_{\pm j}c(i,\mp_j)m_h(i) \\ 
  &\phantom{xx}
  + \sum_{\ell=1, \ell \neq j}^d\big[\max\{-\nabla_{\pm_j}c(i,+_\ell),0\} 
  + \max\{-\nabla_{\pm_j}c(i,-_\ell),0\}\big]m_h(i).
\end{align*}
We combine the sum over $\ell\neq j$ and the second and third terms in $E^+$ to obtain
\begin{align*}
  E^+ &= \sum_{\ell=1}^d
  \big[c(i-he_\ell,+_\ell)m_h(i-he_\ell) 
  + c(i+he_\ell,-_\ell)m(i+he_\ell)\big] - \na_{+j}c(i,-_j)m_h(i) \\
  &\phantom{xx}
  + \sum_{\ell=1, \ell \neq j}^d\big[\max\{-\nabla_{+_j}c(i,+_\ell),0\} 
  + \max\{-\nabla_{+_j}c(i,-_\ell),0\}\big]m_h(i) \\
  &= \sum_{\ell=1}^d\big(c(i,+_\ell) + c(i,-_\ell)\big)
  - \kappa_-(i+ he_j,j),
\end{align*}
where the last step follows from definition \eqref{3.kappa+} of $\kappa_+$ and the identity
\begin{align*}
  \sum_{\ell=1}^d&\big(c(i-he_\ell,+_\ell)m_h(i-he_\ell) + c(i+he_\ell,-_\ell)m_h(i+he_\ell)\big) \\
  &= \sum_{\ell=1}^d (c(i,+_\ell)+c(i,-_\ell))m_h(i)
  \quad\mbox{for }i\in\K_h,
\end{align*}
which is similarly shown as in the proof of Lemma \ref{lem.invmeas}. A similar computation is done for $E^-$, resulting eventually in 
\begin{align*}
  E^\pm = \sum_{\ell=1}^d\big(c(i,+_\ell) + c(i,-_\ell)\big)
  - \kappa_\mp(i\pm he_j,j).
\end{align*}
This shows that
\begin{align}\label{4.I}
  I &= \sum_{\substack{i\in\mathcal{K}_h \\ j=1,\ldots, d}}
  \bigg[c(i,+_j)f^\phi(i,i+he_j)\bigg(\sum_{\ell=1}^d\big(c(i,+_\ell) 
  + c(i,-_\ell)\big) - \kappa_-(i+he_j,j)\bigg) \\
  &\phantom{xx}
  + c(i,-_j)f^\phi(i,i-he_j)\Big(\sum_{\ell=1}^d\big(c(i,+_\ell) 
  + c(i,-_\ell)\big) - \kappa_+(i - he_j,j)\bigg)\bigg]m_h(i). \nonumber 
\end{align}

{\em Step 3: End of the proof.} We compute the difference $I-J$ by inserting the expressions derived in Steps 1 and 2 and observing that the terms involving $c(i,\pm_\ell)$ cancel out:
\begin{align*}
  I-J &= \sum_{\substack{i\in\mathcal{K}_h \\ j=1,\ldots, d}}  c(i,+_j)f^{\phi}(i,i+he_j) (-2\kappa_-(i+he_j,j)) m_h(i) \\ 
  &\phantom{xx} + \sum_{\substack{i\in\mathcal{K}_h \\ j=1,\ldots, d}}  c(i,-_j)f^{\phi}(i,i-he_j) (-2\kappa_+(i-he_j,j)) m_h(i) \\
  &\le -\kappa_\phi\sum_{(i,\delta)\in S}c(i,\delta)
  f^\phi(i,\delta i)m_h(i),
\end{align*}
with $\kappa_\phi=\min_{i, i+he_j\in\K_h,\,j=1,\ldots,d}
\{\kappa_-(i+he_j,j)+\kappa_+(i,j)\}$. In the last step,
we applied 
\begin{align*}
  c(i,\pm_j)&f^{\phi}(i,i\pm he_j) (-\kappa_\mp(i \pm he_j,j)) m_h(i) \\ 
  & =c(i\mp he_j,\pm_j)f^{\phi}(i\mp he_j,i) (-\kappa_\mp(i,j))
   m_h(i\mp he_j) \\ 
  & =c(i,\mp_j)f^{\phi}(i,i\mp he_j) (-\kappa_\mp (i,j)) m_h(i),
\end{align*}
using the symmetry of $f^\phi$, $f^\phi(i,i\pm he_j)\ge 0$, and $c(i\pm he_j, \mp j)m(i\pm he_j)=c(i,\pm j)m(i)$. Note that the terms involving $\kappa_-(i+he_j,j)$ for $i+he_j\notin \K_h$ and $\kappa_+(i-he_j,j)$ for $i-he_j\notin \K_h$ vanish since the rates $c(i,+_j)$ for $i+he_j\notin \K_h$ and $c(i,-_j)$ for $i-he_j\notin \K_h$ are zero. Inserting this inequality into \eqref{4.IJ} 
 finishes the proof.
\end{proof}

\begin{lemma}[Conditions (ii) and (iii)]
Let Assumptions (A2)--(A3) hold. Then conditions {\rm (ii)} and {\rm (iii)} of Theorem \ref{thm.conforti} are satisfied with $2\kappa''=\kappa'''=\kappa_\phi$.
\end{lemma}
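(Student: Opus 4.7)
The plan is that both conditions follow by direct inspection of the coupling rates defined in \eqref{4.coupl}, with no further estimation required. I would first reduce to the case $\delta=+_j$: the symmetry $\mathbf{c}(i,i-he_j,\gamma,\bar\gamma)=\mathbf{c}(i-he_j,i,\bar\gamma,\gamma)$ built into the definition of $\mathbf{c}$ converts the case $\delta=-_j$ into the companion pair $(i-he_j,+_j)$, with the roles of $\delta$ and $\delta^{-1}$ exchanged accordingly. Assumption (A3) guarantees that the $\kappa_\pm(i,j)$ appearing in \eqref{4.coupl} are strictly positive, so the infima below are actually attained and positive.

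So fix $(i,+_j)\in S$, so that $\delta i=i+he_j$ and $\delta^{-1}=-_j$. Reading off the fourth and fifth lines of \eqref{4.coupl}, one obtains
$$
  \mathbf{c}(i,i+he_j,+_j,e)=\kappa_+(i,j),\qquad
  \mathbf{c}(i,i+he_j,e,-_j)=\kappa_-(i+he_j,j),
$$
and by the very definition of $\kappa_\phi$ in Theorem \ref{thm.decay} each of these is bounded below by $\kappa_\phi/2$. Condition~(ii) of Theorem \ref{thm.conforti} therefore holds with $\kappa''=\kappa_\phi/2$, so $2\kappa''=\kappa_\phi$.

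For condition (iii), I would enumerate the pairs $(\gamma,\bar\gamma)\in G^*_i\times G^*_{\delta i}$ with $\gamma i=\bar\gamma(i+he_j)$; this is the requirement that the displacement produced by $\gamma$ exceeds that produced by $\bar\gamma$ by exactly $he_j$. Since every element of $G^*=G\cup\{e\}$ changes at most one coordinate, and by $\pm h$ if so, the only admissible combinations are $(\gamma,\bar\gamma)=(+_j,e)$, with common target $i+he_j$, and $(\gamma,\bar\gamma)=(e,-_j)$, with common target $i$; any other combination would either leave a coordinate mismatch or require a move of size $2h$ along $e_j$. The rates contributed by these two pairs sum to
$$
  \kappa_+(i,j)+\kappa_-(i+he_j,j)\ge 2\min\{\kappa_+(i,j),\kappa_-(i+he_j,j)\}=\kappa_\phi,
$$
so condition (iii) holds with $\kappa'''=\kappa_\phi$, yielding $2\kappa''=\kappa'''=\kappa_\phi$ as claimed. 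No step in this plan looks delicate; the only thing to watch is the exhaustiveness of the enumeration in condition (iii), which is forced by the single-coordinate nature of the admissible moves.
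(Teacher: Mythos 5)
Your proposal is correct and follows essentially the same route as the paper: both arguments simply read off the coupling rates \eqref{4.coupl} (using the symmetric definition to reduce to $\delta=+_j$), identify $\mathbf{c}(i,i+he_j,+_j,e)=\kappa_+(i,j)$ and $\mathbf{c}(i,i+he_j,e,-_j)=\kappa_-(i+he_j,j)$, and bound these by $\kappa_\phi/2$ via the definition of $\kappa_\phi$, with condition (iii) obtained from the same two pairs being the only ones with $\gamma i=\bar\gamma\delta i$. Your explicit enumeration argument for why no other pair can match (single-coordinate moves of size $h$) is a welcome extra detail that the paper leaves implicit, and your writing $\kappa_+(i,j)+\kappa_-(i+he_j,j)$ is in fact the precise form of the sum appearing in the paper's display.
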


\begin{proof}
By definition \eqref{4.coupl} of the coupling rates, we have
\begin{align*}
  &\inf_{(i,\delta)\in S}\min\{\mathbf{c}(i,\delta i,\delta,e),
  \mathbf{c}(i,\delta i,e,\delta^{-1})\} \\
  &\phantom{xxxx}= \inf_{\substack{i\in\K_h,\,j=1,\ldots,d \\ i+he_j\in\K_h}}
  \min\{\mathbf{c}(i,i+he_j,+_j,e),
  \mathbf{c}(i,i+he_j,e,-_j)\} \\
  &\phantom{xxxx}= \inf_{\substack{i\in\K_h,\,j=1,\ldots,d \\ i+he_j\in\K_h}}
  \min\{\kappa_+(i,j),\kappa_-(i+he_j,j)\} = \frac{\kappa_\phi}{2}, \\
  &\inf_{(i,\delta)\in S}\sum_{\substack{\gamma\in G^*_i,\, 
  \bar{\gamma}\in G^*_{\delta i} \\ 
  \gamma i=\bar{\gamma}\delta i}}
  \mathbf{c}(i,\delta i, \gamma,\bar{\gamma})
  = \inf_{\substack{i\in\mathcal{K}_h, j=1,\ldots,d: \\ i+he_j\in\mathcal{K}_h }} (\kappa_+(i,j)+\kappa_-(i,j))
  \ge \kappa_{\phi},
\end{align*}
since $\kappa_\pm(i,j)\ge \kappa_\phi/2$.
\end{proof}

Theorem \ref{thm.decay} now follows directly from Theorem \ref{thm.conforti}.


\subsection{Exponential decay in Wasserstein distance}

We prove Theorems \ref{thm.W} and \ref{thm.W1}.

{\em Step 1: Decay in the $L^2$ Wasserstein distance.} Because of the infimum property of the $L^2$ Wasserstein distance, it is sufficient to find a coupling of two copies $(Y_t^1)_{t\ge 0}$ and $(Y_t^2)_{t\ge 0}$ of the Markov chain (with initial data $\nu$ and $\eta$, respectively) such that 
$$
  \frac{\dd}{\dd t}\E(|Y_t^1-Y_t^2|^2) 
  \le -\kappa\E(|Y_t^1-Y_t^2|^2) + O(h),
$$
since by Gr\"onwall's inequality and the property $\mathcal{W}_2(\nu p_t,\eta p_t)^2\le \E(|Y_t^1-Y_t^2|^2)$, the conclusion follows after taking the infimum over all couplings of $\nu$ and $\eta$ and taking the square root. 

Consider the transition rates \eqref{1.c}. We construct a coupling of two copies of the transition kernel by coupling the transition rates in the following way. For $i,k\in\K_h$ and $\gamma,\bar\gamma\in G$, we set
\begin{align}\label{5.coupl}
  \mathbf{c}(i,k,\gamma,\bar\gamma)
  = \begin{cases}
  \min\{c(i,\gamma),c(k,\bar\gamma)\} 
  &\mbox{if }\gamma=\bar\gamma\in G, \\
  \max\{c(i,\gamma)-c(k,\bar\gamma),0\}
  &\mbox{if }\gamma\in G,\,\bar\gamma=e, \\
  \max\{c(i,\gamma)-c(k,\bar\gamma),0\}
  &\mbox{if }\bar\gamma\in G,\,\gamma=e, \\
  0 &\mbox{else}.
  \end{cases}
\end{align}
By definition \eqref{4.coupl}, this defines indeed a coupling. We denote the generator for the corresponding Markov chain on $\K_h\times\K_h$ by $\L_h^2$. We infer from the definition of the coupling that
\begin{align*}
  \L_h^2|i-k|^2 &= \sum_{\gamma,\,\bar\gamma\in G^*}
  \mathbf{c}(i,k,\gamma,\bar\gamma)\big(
  |\gamma i-\bar\gamma k|^2 - |i-k|^2\big) \\
  &= \sum_{\gamma\in G}\big[\max\{c(i,\gamma)-c(k,\gamma),0\}
  \big(|\gamma i-k|^2 - |i-k|^2\big) \\
  &\phantom{xx} + \max\{c(k,\gamma)-c(i,\gamma),0\}
  \big(|i-\gamma k|^2 - |i-k|^2\big)\big].
\end{align*}
As $G$ consists of the moves $\pm_j$ with $j=1,\ldots,d$, this sum becomes
\begin{align}\label{5.Lh2}
  & \L_h^2|i-k|^2 = \sum_{j=1}^d\big[C^+\big(|i+he_j-k|^2-|i-k|^2\big)
  + C^-\big(|i-he_j-k|^2-|i-k|^2\big)\big], \\
  & \mbox{where }C^\pm = \max\{c(i,\pm_j)-c(k,\pm_j),0\} 
  + \max\{c(k,\mp_j)-c(i,\mp_j),0\}. \nonumber 
\end{align}
Since we have assumed that the potential is additive, $V(i)=\sum_{j=1}^d V_j(i_j)$, we have $V^h(i+he_j)-V^h(i)=V_j^h(i_j+h)-V_j^h(i_j)$. Then it follows from definition \eqref{1.c} of $c(i,\pm_j)$ that
\begin{align*}
  c(i,\gamma)-c(k,\gamma) 
  &= \frac{\sigma^2}{h^2}\big(e^{-(V_j^h(i_j+h)-V_j^h(i_j))
  /(2\sigma^2)} - e^{-(V_j^h(k_j+h)-V_j^h(k_j))/(2\sigma^2)}\big) \\
  &= \frac{\sigma^2}{h^2}\exp\bigg(-\frac{1}{2\sigma^2}\int_0^h
  \pa V_j^h(i_j+s)\dd s\bigg) \\
  &\phantom{xx}\times\bigg[1 - \exp\bigg(
  -\frac{1}{2\sigma^2}\int_0^h\big(\pa V_j^h(k_j+s)-\pa V_j^h(i_j+s)
  \big)\dd s\bigg)\bigg],
\end{align*}
where $\pa V_j^h$ denotes the derivative of the scalar function $V_j^h$, interpreted as a function defined on $D$. This gives
\begin{align*}
  C^+ &= \frac{\sigma^2}{h^2}\max\bigg\{
  \exp\bigg(-\frac{1}{2\sigma^2}(V_j^h(i_j+h)-V_j^h(i_j))\bigg) \\
  &\phantom{xx}\times\bigg[1 - \exp\bigg(
  -\frac{1}{2\sigma^2}\int_0^h\big(\pa V_j^h(k_j+s)-\pa V_j^h(i_j+s)
  \big)\dd s\bigg)\bigg],0\bigg\} \\
  &\phantom{xx}+ \frac{\sigma^2}{h^2}\max\bigg\{
  \exp\bigg(-\frac{1}{2\sigma^2}(V_j^h(i_j-h)-V_j^h(i_j))\bigg) \\
  &\phantom{xx}\times\bigg[\exp\bigg(
  \frac{1}{2\sigma^2}\int_0^h\big(\pa V_j^h(k_j-s)-\pa V_j^h(i_j-s)
  \big)\dd s\bigg)-1\bigg],0\bigg\}.
\end{align*}
Since $\max\{a,0\}=a\mathrm{1}_{\{a>0\}}$, we may remove the maximum by introducing the factor $\mathrm{1}_{\{k_j>i_j\}}$. Then we deduce from Assumption (A2) that
$$
  \pa V_j^h(k_j+s)-\pa V_j^h(i_j+s) \ge \kappa(k_j-i_j)
  \quad\mbox{on }\{k_j>i_j\},
$$
and hence,
\begin{align*}
  C^+ &\ge \frac{\sigma^2}{h^2}
  \exp\bigg(-\frac{1}{2\sigma^2}(V_j^h(i_j+h)-V_j^h(i_j))\bigg)
  \bigg[1 - \exp\bigg(-\frac{h\kappa}{2\sigma^2}(k_j-i_j)\bigg)\bigg]
  \mathrm{1}_{\{k_j>i_j\}} \\
  &\phantom{xx}+ \frac{\sigma^2}{h^2}
  \exp\bigg(-\frac{1}{2\sigma^2}(V_j^h(i_j-h)-V_j^h(i_j))\bigg)
  \bigg[\exp\bigg(\frac{h\kappa}{2\sigma^2}(k_j-i_j)\bigg)-1\bigg]
  \mathrm{1}_{\{k_j>i_j\}}.
\end{align*}
By assumption, $\na V$ is Lipschitz continuous, and so is $\pa V_j^h$.  Then the Taylor expansions $1-\exp(-x)=x+O(x^2)$ and $\exp(x)-1=x+O(x^2)$ show that
\begin{align*}
  C^+ &\ge \frac{\sigma^2}{h^2}\bigg(\frac{h\kappa}{\sigma^2}(k_j-i_j)
  - O(h^2(k_j-i_j)^2)\bigg)\mathrm{1}_{\{k_j>i_j\}}
  = \frac{1}{h}(\kappa-O(h))(k_j-i_j)\mathrm{1}_{\{k_j>i_j\}}.
\end{align*}
A similar computation leads to
$C^- \ge h^{-1}(\kappa-O(h))(i_j-k_j)\mathrm{1}_{\{k_j<i_j\}}$.
We insert these estimations into \eqref{5.Lh2} and use
$|i\pm he_j-k|^2-|i-k|^2=-2h|i_j-k_j|+h^2$ (which is negative for sufficiently small $h>0$):
\begin{align*}
  \L_h^2|i-k|^2 &\le (\kappa-O(h))\sum_{j=1}^d
  |k_j-i_j|(\mathrm{1}_{\{k_j>i_j\}} + \mathrm{1}_{\{k_j<i_j\}})
  (-2|i_j-k_j|+h) \\
  &= -2\kappa|i-k|^2 + O(h).
\end{align*}

We infer for the two copies $(Y_t^1)$ and $(Y_t^2)$ of the Markov chain that
\begin{align*}
  \frac{\dd}{\dd t}\E(|Y_t^1-Y_t^2|^2)
  = \E\big(\L_h^2|Y_t^1-Y_t^2|^2\big) 
  \le -2\kappa\E(|Y_t^1-Y_t^2|^2) + O(h),
\end{align*}
and the result follows after an application of Gr\"onwall's lemma, as detailed above.

{\em Step 2: Decay in the $L^p$ Wasserstein distance.} Let $1<p<\infty$ with $p\neq 2$. The proof is similar as in Step 1, but we need the condition $d=1$. Using the coupling \eqref{5.coupl}, we obtain
\begin{align*}
  \L_h^2|i-k|^p &= \big(\mathbf{c}(i,k,+,e) + \mathbf{c}(i,k,e,-)\big)
  \big(|i+h-k|^p - |i-k|^p\big) \\
  &\phantom{xx}+ \big(\mathbf{c}(i,k,e,+) + \mathbf{c}(i,k,-,e)\big)
  \big(|i-h-k|^p - |i-k|^p\big).
\end{align*}
The assumption $d=1$ enters in the estimate
$$
  |i\pm h-k|^p - |i-k|^p = \pm p(i-k)^{p-1}h + O(h^2),
$$
which generally does not hold for $d>1$. Then, after a similar computation as in Step 1,
$$
  \L_h^2|i-k|^p \le -p\kappa|i-k|^p + O(h),
$$
and we conclude with Gr\"onwall's inequality. This finishes the proof of Theorem \ref{thm.W}. 

{\em Step 3: Decay in the $L^1$ Wasserstein distance.} We turn to the proof of Theorem \ref{thm.W1}. Consider the graph distance $\mathrm{d}(i,k)=\sum_{j=1}^d|i_j-k_j|$ for $i,k\in\K_h$. With respect to this distance, the set $\mathcal{K}_h$ forms a geodesic graph in the sense that for every $i,k\in\mathcal{K}_h$, there exists a path $i=i^{(0)},i^{(1)},\ldots,i^{(n)}=k$ such that $\mathrm{d}(i,k)=\sum_{\ell=1}^n\mathrm{d}(i^{(\ell)},i^{(\ell-1)})$. We remark that an edge between $i^{(\ell)}$ and $i^{(\ell+1)}$ exists in the graph if there exists $\gamma\in G$ such that $\gamma i^{(\ell)}=i^{(\ell+1)}$ (and vice versa, there exists $\bar{\gamma}\in G$ such that $\bar{\gamma}i^{(\ell+1)}=i^{(\ell)}$).

By the path coupling method of \cite{BuDy97}, it is sufficient for obtaining contraction in Wasserstein distance with respect to the distance d to prove contraction with respect to the distance d for every neighbouring states $(i,k)$. Indeed, let $(i,k)$ be two neighbouring states, i.e., there exists $\gamma \in G$ such that $\gamma i=k$ (and vice versa, there exists $\bar{\gamma}\in G$ such that $\bar{\gamma}k=i$). Note that $\mathrm{d}(i,k)=h$. Without loss of generality, we can assume that $i,k\in\mathcal{K}_h$ is such that $k=i+he_j$ for some $j\in\{1,\ldots, d\}$. Recalling the coupling rates \eqref{4.coupl}, it holds for the corresponding generator $\mathcal{L}^2_h$ on the product space that
\begin{align*}
  \mathcal{L}^2_h\mathrm{d}(i,k)
  &= \sum_{\gamma,\, \bar{\gamma}\in G} \mathbf{c}(i,k,\gamma,\bar{\gamma})\big(\mathrm{d}(\gamma i,\bar{\gamma}k)-\mathrm{d}(i,k)\big) \\ 
  &= \kappa_+(i,j)\big(\mathrm{d}(i+he_j,k)
  - \mathrm{d}(i,k)\big) 
  + \kappa_-(k,j)\big(\mathrm{d}(i,k-he_j)-\mathrm{d}(i,k)\big) \\
  &= -\big(\kappa_+(i,j)+\kappa_-(i+he_j,j)\big)\mathrm{d}(i,k)
  \le -\kappa_1\mathrm{d}(i,k),
\end{align*}
where $\kappa_1= \min_{j=1,\ldots,d,\,i,i+he_j\in\mathcal{K}_h} (\kappa_+(i,j)+\kappa_-(i+he_j,j))$.
It follows from the path coupling method that for two continuous-time Markov chains driven by the transition rates $\mathbf{c}$ given in \eqref{4.coupl},
\begin{align*}
  \frac{\dd}{\dd t}\mathbb{E}[\mathrm{d}(Y_t^1,Y_t^2)]
  \le -\kappa_1 \mathbb{E}[\mathrm{d}(Y_t^1,Y_t^2)],
\end{align*}
and hence by Gr\"onwall's inequality, 
\begin{align*}
  \mathcal{W}_{\dd,1}(\nu p_t,\eta p_t )
  \le \mathbb{E}[\mathrm{d}(Y_t^1,Y_t^2)]
  \leq e^{-\kappa_1 t} \mathbb{E}[\mathrm{d}(Y_0^1, Y_0^2)],
\end{align*}
recalling that $\mathcal{W}_{\dd, 1}$ denotes the $L^1$ Wasserstein distance with respect to the graph distance d. We take the infimum over all couplings $\nu$ and $\eta$:
\begin{align*}
  \mathcal{W}_{\dd, 1}(\nu p_t,\eta p_t)
  \leq e^{-\kappa_1 t}\mathcal{W}_{\dd, 1}(\nu,\eta).
\end{align*}
We deduce from the equivalence of the distance d and the Euclidean distance, $|x-y|\le \mathrm{d}(x,y)\le \sqrt{d}|x-y|$, that
\begin{align*}
  \mathcal{W}_{1}(\nu p_t, \eta p_t)
  \leq \sqrt{d} e^{-\kappa_1 t}\mathcal{W}_{1}(\nu, \eta ),
\end{align*}
which concludes the proof.


\section{Proofs for discrete-time Markov chains}\label{sec.proofs2}

\subsection{Exponential decay in $\phi$-entropy}

We present first the proof of Proposition \ref{prop.BE} and then of Theorem \ref{thm.ddecay}. We consider a discrete-time Markov chain with transition kernel $\pi$ (see \eqref{1.tk}), transition rates $p(i,\gamma)$ (see \eqref{3.p}), and the corresponding invariant measure $m_h$ (see Lemma \ref{lem.invmeas}).

{\em Step 1: Proof of Proposition \ref{prop.BE}.}
We show Proposition \ref{prop.BE} by following the lines of the proof of \cite[Prop.~1]{JuSc17}. It follows from assumptions (i) and (ii) that
\begin{align*}
  \mathcal{F}(\pi^{n+1}f)-\mathcal{F}(\pi^n f)
  &\le -\tau\lambda\mathcal{F}(\pi^n f) 
  \le -\tau\lambda C_P^{-1}\mathcal{P}(\pi^n f) \\
  &= \lambda C_P^{-1}\big(\mathcal{H}^\phi(\pi^{n+1}f|m_h)
  - \mathcal{H}^\phi(\pi^n f|m_h)\big).
\end{align*}
Iterating this argument leads for $k>n$ to
\begin{equation}\label{5.auxF}
  \mathcal{F}(\pi^{k}f)-\mathcal{F}(\pi^n f)
  \le \lambda C_P^{-1}\big(\mathcal{H}^\phi(\pi^{k}f|m_h)
  - \mathcal{H}^\phi(\pi^n f|m_h)\big).
\end{equation}
We infer from (ii) that 
$\mathcal{F}(\pi^{n}f)\le(1-\tau\lambda)^n\mathcal{F}(f)$ and thus 
$\lim_{k\to\infty}\mathcal{F}(\pi^k f)=0$. We perform the limit $k\to\infty$ in \eqref{5.auxF}, using (iii):
$$
  \mathcal{F}(\pi^n f)\ge\lambda C_P^{-1}\mathcal{H}^\phi(\pi^n f|m_h).
$$
Then the result follows from
\begin{align*}
  \mathcal{H}^\phi(\pi^n f|m_h) 
  &\le \lambda^{-1}C_P\mathcal{F}(\pi^n f)
  \le \lambda^{-1}C_P(1-\tau\lambda)^n\mathcal{F}(f) \\
  &= \frac{C_P\mathcal{F}(f)}{\lambda\mathcal{H}^\phi(f|m_h)}
  (1-\tau\lambda)^n\mathcal{H}^\phi(f|m_h)
  \le \frac{C_P\mathcal{F}(f)}{\lambda\mathcal{H}^\phi(f|m_h)}
  e^{-\lambda n\tau}\mathcal{H}^\phi(f|m_h).
\end{align*}

{\em Step 2: Proof of Theorem \ref{thm.ddecay}.} We need to verify conditions (i)--(iii) of Proposition \ref{prop.BE}. First, we compute
\begin{align*}
  \tau\mathcal{P}(f) 
  &= \mathcal{H}^\phi(f|m_h) - \mathcal{H}^\phi(\pi f|m_h) \\
  &= \sum_{i\in\K_h}\phi(f(i))m_h(i) 
  - \phi\bigg(\sum_{i\in\K_h}f(i)m_h(i)\bigg) \\
  &\phantom{xx}- \sum_{i\in\K_h}\phi(\pi f(i))m_h(i) 
  - \phi\bigg(\sum_{i\in\K_h}\pi f(i)m_h(i)\bigg) \\
  &= \sum_{i\in\K_h}\big(\phi(f(i))-\phi(\pi f(i))\big)m_h(i),
\end{align*}
where the last step follows from the invariance property \eqref{2.inv}. 
Jensen's inequality (recall that $\phi$ is convex) implies that
\begin{align*}
  \tau\mathcal{P}(f) 
  \ge \sum_{i\in\K_h}\big(\phi(f(i))-\pi\phi( f(i))\big)m_h(i)
  = \sum_{i\in\K_h}\big(\phi(f(i))-\phi(f(i))\big)m_h(i) = 0.
\end{align*}
To prove that $\mathcal{P}(f)\le C_P\mathcal{F}(f)$ for some $C_P>0$, we use the definition of $\phi_\alpha$, for all $x,y\ge 0$,
\begin{align}\label{5.phia}
  \phi_\alpha(x)-\phi_\alpha(y) = \begin{cases}
  \frac{1}{\alpha}(x\phi'_\alpha(x)-y\phi'_\alpha(y))
  - \frac{\alpha}{\alpha-1}(x-y) &\mbox{if }1<\alpha\le 2, \\
  (x\phi'_\alpha(x)-y\phi'_\alpha(y))
  - (x-y) &\mbox{if }\alpha=1.
  \end{cases}
\end{align}
We deduce from the definition of the Dirichlet form and the invariance property \eqref{2.inv} that, for functions $f\ge 0$,
\begin{align*}
  \mathcal{F}(f) &= \frac12\sum_{i\in\K_h}\bigg(\sum_{\gamma\in G}
  p(i,\gamma)\big(f(\gamma i)-f(i)\big)\big(\phi'_\alpha(f(\gamma i))
  - \phi'_\alpha(f(i))\big)\bigg)m_h(i) \\
  &= \frac12\sum_{i\in\K_h}\bigg(\sum_{\gamma\in G}p(i,\gamma)
  \big(2f(i)\phi'_\alpha(f(i)) - f(i)\phi'_\alpha(f(\gamma i))
  - f(\gamma i)\phi'_\alpha(f(i))\big)\bigg)m_h(i) \\
  &\ge \sum_{i\in\K_h}\bigg(f(i)\phi'_\alpha(f(i)) 
  - \frac12f(i)\phi'_\alpha(\pi f(i)) 
  - \frac12\pi f(i)\phi'_\alpha(f(i))\bigg)m_h(i),
\end{align*}
where we used Jensen's inequality for the concave function $\phi'_\alpha$ in the last step. We use definition \eqref{5.phia} of $\phi_\alpha$ and invariance property \eqref{2.inv} again to find that
\begin{align*}
  \tau\mathcal{P}(f) &= \sum_{i\in\K_h}\big(\phi_\alpha(f(i))
  - \phi_\alpha(\pi f(i))\big)m_h(i) \\
  &= \frac{1}{\alpha}\sum_{i\in\K_h}\big(f(i)\phi'_\alpha(f(i))
  - \pi f(i)\phi'_\alpha(\pi f(i))\big)m_h(i).
\end{align*}
This shows that
\begin{align*}
  \mathcal{F}(f)-\frac{\alpha\tau}{2}\mathcal{P}(f)
  \ge \frac12\sum_{i\in\K_h}\big(f(i)-\pi f(i)\big)
  \big(\phi'_\alpha(f(i))-\phi'_\alpha(\pi f(i))\big)m_h(i) \ge 0,
\end{align*}
since $(a-b)(\phi'_\alpha(a)-\phi'_\alpha(b))\ge 0$ for all $a,b\ge 0$ by the convexity of $\phi_\alpha$. This verifies condition (i) with constant $C_P=2/(\alpha\tau)$. 

To show condition (ii), we consider the same coupling \eqref{4.coupl} as for continuous-time Markov chains, except that the rates $c$ are replaced by the transition rates \eqref{3.p}. In particular, the resulting coupling rates $\mathbf{p}(i,\bar\imath,\cdot,\cdot)$ satisfy
$$
  \sum_{\gamma\in G_i^*}\sum_{\bar\gamma\in G_{\bar\imath}^*}
  \mathbf{p}(i,\bar\imath,\gamma,\bar\gamma) 
  = \sum_{\gamma\in G_i^*}p(i,\gamma)
  = \sum_{\bar\gamma\in G_{\bar\imath}^*}p(\bar\imath,\bar\gamma) = 1
  \quad\mbox{for }i,\bar\imath\in\K_h.
$$
We deduce from Lemma \ref{lem.ineq} that
\begin{align*}
  \mathcal{F}(\pi f) - \mathcal{F}(f)
  &= \frac12\sum_{\substack{(i,\delta)\in S \\ \gamma\in G_i^*,\,
  \bar\gamma\in G_{\bar\imath}^*}}p(i,\delta)
  \mathbf{p}(i,\delta i,\gamma,\bar\gamma)
  \big(f^{\phi_\alpha}(\gamma i,\bar\gamma\delta i) 
  - f^{\phi_\alpha}(i,\delta i)\big)m_h(i) \\
  &\le -\frac{\kappa_{\phi}}{2\mathcal{T}}
  \sum_{(i,\delta)\in S}p(i,\delta)
  f^{\phi_\alpha}(i,\delta i)m_h(i) 
  = -\frac{\kappa_{\phi}}{\mathcal{T}}\mathcal{F}(f).
\end{align*}
We infer that condition (ii) holds with $\tau=\mathcal{T}^{-1}$. 

Finally, we verify condition (iii). We recall that since the Markov chain $(Z_n^h)_{n\ge 0}$ is time-homogeneous, aperiodic, irreducible and defined on a finite state space, its laws converge to the unique invariant measure \cite[Theorem 1.8.3]{Nor97}, and it holds that $\pi^n f(i) \to M:= \sum_{k\in\K_h}f(k)m_h(k)$ as $n\to\infty$ for all $f\ge 0$ and $i\in\K_h$. Therefore, since $\sum_{i\in\K_h}m_h(i)=1$,
\begin{align*}
  \mathcal{H}^{\phi_\alpha}(\pi^n f|m_h) 
  &= \sum_{i\in\K_h}\phi_\alpha(\pi^n f(i))m_h(i)
  - \phi_\alpha\bigg(\sum_{i\in\K_h}\pi^n f(i)m_h(i)\bigg) \\
  &\to \sum_{i\in\K_h}\phi_\alpha(M)m_h(i) 
  - \phi_\alpha\bigg(\sum_{i\in\K_h} Mm_h(i)\bigg) = 0.
\end{align*}
Hence, we can apply Proposition \ref{prop.BE} to conclude that
$$
  \mathcal{H}^{\phi_\alpha}(\pi^n f|m_h) \le C_fe^{-\kappa_{\phi} n\tau}
  \mathcal{H}^{\phi_\alpha}(f|m_h),
$$
which finishes the proof. 


\begin{appendix}
\section{Convergence of the Markov chain to the SDE}\label{sec.conv}

We prove Theorem \ref{thm.conv}. The proof relies on the diffusion approximation of \cite[Theorem 7.4.1]{EtKu86}, and it is rather standard. We present it for completeness. For $n\in\N$, let $h=2^{-n}\xi$, where $\xi>0$ is such that $2K/h\in\N$ holds. We write $(Y_t^n)$ instead of $(Y_t^h)$, $\L_n$ instead of $\L_h$, $\K_n$ instead of $\K_h$, and $V^n$ instead of $V^h$. For each $f\in\mathcal{A}$ with $\mathcal{A}$ defined in \eqref{1.A}, we introduce the process
$$
  B_f^n(t) = f(Y_0^n) + \int_0^t \L_n(f(Y_s^n))\dd s,
$$
and we define for $f,g\in\mathcal{A}$ the process
$$
  A_{f,g}^n(t) = \int_0^t\sum_{\gamma\in G}c(Y_s^n,\gamma)
  \big(f(\gamma Y_s^n)-f(Y_s^n)\big)\big(g(\gamma Y_s^n)-g(Y_s^n)\big)
  \dd s.
$$
We notice that $(A_{f,g}^n)$ is symmetric in the sense $A_{f,g}^n=A_{g,f}^n$ and that $A_{f,f}^n(t)-A_{f,f}^n(s)$ is positive for $t>s\ge 0$. We set $\mathcal{F}_t^n=\sigma(Y_s^n,B_f^n(s),A_{f,g}^n(s):s\le t)$. The process $(Y_t^n)_{t\ge 0}$ does not explode, since it is restricted to $\K_n$. Then $M_f^n(t):=f(Y_t^n)-B_f^n(t)$ is an $\mathcal{F}_t^n$-martingale and $M_f^nM_g^n-A_{f,g}^n$ is an $\mathcal{F}_t^n$-martingale, since, using Definition \eqref{2.Lh} of $\L_n$ and denoting by $\langle\cdot,\cdot\rangle_t$ the angle-bracket processs,
\begin{align*}
  \langle &M_f^n,M_g^n\rangle_t = \int_0^t\big(
  \L_n(f(Y_s^n)g(Y_s^n)) - f(Y_s^n)\L_n(g(Y_s^n)) 
  - g(Y_s^n)\L_n(f(Y_s^n))\big)\dd s \\
  &= \int_0^t\bigg(\sum_{\gamma\in G}c(Y_s^n,\gamma)\big(f(\gamma Y_s^n)
  g(\gamma Y_s^n)-f(Y_s^n)g(Y_s^n)\big) \\
  &\phantom{xx}- f(Y_s^n)\sum_{\gamma\in G}c(Y_s^n,\gamma)
  \big(g(\gamma Y_s^n)-g(Y_s^n)\big) \\
  &\phantom{xx}- g(Y_s^n)\sum_{\gamma\in G}c(Y_s^n,\gamma)
  \big(f(\gamma Y_s^n)-f(Y_s^n)\big)\bigg)\dd s \\
  &= \int_0^t\sum_{\gamma\in G}c(Y_s^n,\gamma)\big(f(\gamma Y_s^n)
  - f(Y_s^n)\big)\big(g(\gamma Y_s^n) - g(Y_s^n)\big)
  = A_{f,g}^n(t).
\end{align*}
Observe that the integrand of $\langle M_f^n,M_g^n\rangle_t$ corresponds, up to the factor 1/2, to the Carr\'e-du-Champ operator associated to the generator $\L_h$. Since each jump has the height $h=2^{-n}\xi$, we have
$$
  \lim_{n\to\infty}\E\Big[\sup_{t\le T}|Y_t^n-Y_{t-}^n|^2\Big]
  \le \lim_{n\to\infty}\E[2^{-2n}\xi^2]=0.
$$
The processes $(B_f^n)$ and $(A_{f,g}^n)$ are continuous in time, which implies for all $T>0$ and $f,g\in\mathcal{A}$ that
$$
  \lim_{n\to\infty}\E\Big[\sup_{t\le T}|B_f^n(t)-B_f^n(t-)|^2\Big]
  = \lim_{n\to\infty}\E\Big[\sup_{t\le T}
  |A_{f,g}^n(t)-A_{f,g}^n(t-)|^2\Big] = 0.
$$
This shows that the processes $M_f^n$ are uniformly integrable and that the limit is a time-continuous process.

Next, we claim that
\begin{equation}\label{a.convp}
  \lim_{n\to\infty}\Pb\bigg(\sup_{t\le T}\bigg|B_f^n(t)
  - \int_0^t\big(\sigma^2\Delta f(Y_s^n)-\na V(Y_s^n)\cdot\na f(Y_s^n)
  \big)\dd s - f(Y_0^n)\bigg|\ge\eps\bigg) = 0.
\end{equation}
To this end, we introduce the differences
$$
  \pa_j^+ f(x)=n(f(x+n^{-1}e_j)-f(x)), \quad
  \pa_j^- f(x)=n(f(x)-f(x-n^{-1}e_j)). 
$$
Then, by adding and subtracting some terms involving $\pa_j V(Y_s^n):=(\pa V/\pa x_j)(Y_s^n)$,
\begin{align*}
  \bigg|B_f^n(t) &- \int_0^t\big(\sigma^2\Delta f(Y_s^n)
  -\na V(Y_s^n)\cdot\na f(Y_s^n)\big)\dd s - f(Y_0^n)\bigg| \\
  &= \bigg|\int_0^t \big[\L_n(f(Y_s^n))
  - \big(\sigma^2\Delta f(Y_s^n)
  -\na V(Y_s^n)\cdot\na f(Y_s^n)\big)\big]\dd s\bigg| \\
  &\le I^+ + I^- + J^+ + J^- + J^0,
\end{align*}
where, inserting the definition of $\L_h$ and $c(i,\pm_j)$,
\begin{align*}
  I^\pm &= \sigma^2 n^2\bigg|\int_0^d\sum_{j=1}^d\bigg[
  \exp\bigg(\mp\frac{\pa_j^\pm V^n(Y_s^n)}{2\sigma^2 n}\bigg)
  - \exp\bigg(-\frac{\pa_j V(Y_s^n)}{2\sigma^2 n}\bigg)\bigg] \\
  &\phantom{xx}\times\big(f(Y_s^n\pm n^{-1}e_j)-f(Y_s^n)\big)\dd s\bigg|, \\
  J^\pm &= \sigma^2 n^2\bigg|\int_0^d\sum_{j=1}^d\bigg[
   \exp\bigg(\mp\frac{\pa_j V(Y_s^n)}{2\sigma^2 n}\bigg)
  - \bigg(1\mp\frac{\pa_j V(Y_s^n)}{2\sigma^2 n}\bigg)\bigg] \\
  &\phantom{xx}\times\big(f(Y_s^n\pm n^{-1}e_j)-f(Y_s^n)\big)\dd s\bigg|,
  \\
  J^0 &= \bigg|\int_0^t\bigg\{\sum_{j=1}^d\bigg[
  \bigg(\sigma^2n^2 - \frac{n}{2}\pa_j V(Y_s^n)\bigg)
  \big(f(Y_s^n+n^{-1}e_j)-f(Y_s^n)\big) \\
  &\phantom{xx}+ \bigg(\sigma^2n^2 + \frac{n}{2}\pa_j V(Y_s^n)\bigg)
  \big(f(Y_s^n-n^{-1}e_j)-f(Y_s^n)\big)\bigg] \\
  &\phantom{xx}+ \na V(Y_s^n)\cdot\na f(Y_s^n) - \sigma^2\Delta f(Y_s^n)
  \bigg\}\dd s.
\end{align*}
The terms $I^\pm$ and $J^\pm$ converge in expectation to zero as $n\to\infty$, since $D$ is  bounded domain, $V$ has a Lipschitz continuous gradient, and $f\in\mathcal{A}$ is smooth. We claim that the expectation of $J^0$ also converges to zero. Indeed, let the discrete Laplacian and gradient be given by
\begin{align*}
  \Delta_n f(i) &= n^2\sum_{j=1}^d
  \big(f(i+n^{-1}e_j)-2f(i)+f(i-n^{-1}e_j)\big), \\
  \na_n f(i) &= \frac{n}{2}
  \big(f(i+n^{-1}e_j)-f(i-n^{-1}e_j)\big)_{j=1}^d,\quad i\in\K_h.
\end{align*}
This gives 
\begin{align*}
  J^0 &\le \sigma^2\bigg|\int_0^t(\Delta_n f(Y_s^n)-\Delta f(Y_s^n))\dd s
  \bigg| \\
  &\phantom{xx}+ \bigg|\int_0^t\na V(Y_s^n)\cdot\big(\na_n f(Y_s^n)
  - \na f(Y_s^n)\big)\dd s\bigg|\to 0.
\end{align*}
It follows from \eqref{a.convp} and the Markov inequality that
\begin{align*}
  \lim_{n\to\infty}&\Pb\bigg(\sup_{t\le T}\bigg|B_f^n(t)
  - \int_0^t\big(\sigma^2\Delta f(Y_s^n)-\na V(Y_s^n)\cdot\na f(Y_s^n)
  \big)\dd s - f(Y_0^n)\bigg|\ge\eps\bigg) \\
  &\le \frac{1}{\eps}\lim_{n\to\infty}\E(I^++I^++J^++J^++J^0) = 0.
\end{align*}

Similarly, we estimate $A_{f,g}^n$:
\begin{align*}
  \bigg|A_{f,g}^n&(t) - 2\sigma^2\int_0^t\na f(Y_s^n)\cdot\na g(Y_s^n)
  \dd s\bigg| \\
  &= \sigma^2\bigg|\int_0^t\bigg\{\sum_{j=1}^d\bigg[
  \exp\bigg(-\frac{1}{2\sigma^2}\big(V^n(Y_s^n+n^{-1}e_j)-V^n(Y_s^n)
  \big)\bigg)\pa_j^+ f(Y_s^n)\pa_j^+ g(Y_s^n) \\
  &\phantom{xx}
  + \exp\bigg(\frac{1}{2\sigma^2}\big(V^n(Y_s^n-n^{-1}e_j)-V^n(Y_s^n)
  \big)\bigg)\pa_j^- f(Y_s^n)\pa_j^- g(Y_s^n)\bigg] \\
  &\phantom{xx}- 2\na f(Y_s^n)\cdot\na g(Y_s^n)\bigg\}\dd s\bigg| \\
  &\le L^+ + L^- + L^0,
\end{align*}
where 
\begin{align*}
  L^\pm &= \sigma^2\bigg|\int_0^t\sum_{j=1}^d\bigg[
  \exp\bigg(\mp\frac{1}{2\sigma^2}\big(V^n(Y_s^n\pm n^{-1}e_j)-V^n(Y_s^n)
  \big)\bigg)-1\bigg] \\
  &\phantom{xx}\times\pa_j^\pm f(Y_s^n)\pa_j^\pm g(Y_s^n)\dd s\bigg|, \\
  L^0 &= \sigma^2\bigg|\int_0^t\big(\pa_j^+ f(Y_s^n)\pa_j^+ g(Y_s^n)
  + \pa_j^- f(Y_s^n)\pa_j^- g(Y_s^n) 
  - 2\na f(Y_s^n)\cdot\na g(Y_s^n)\big)\dd s\bigg|.
\end{align*}
Similarly as above, the expected values of $L^\pm$ and $L^0$ converge to zero as $n\to\infty$ and an application of Markov's inequality shows that
\begin{align*}
  \lim_{n\to\infty}&\Pb\bigg[\sup_{t\le T} \bigg|A_{f,g}^n(t) 
  - 2\sigma^2\int_0^t\na f(Y_s^n)\cdot\na g(Y_s^n)\dd s\bigg|
  \ge\eps\bigg] \\
  &\le \frac{1}{\eps}\lim_{n\to\infty}\E[L^++L^-+L^0] = 0.
\end{align*}

Summarizing the previous results and taking into account \cite[Theorem 7.4.1]{EtKu86}, we conclude that the sequence of processes $(M_f^n)_{n\in\N}$ and also $(B_f^n)_{n\in\N}$ are relatively compact in the space of  c\`adl\`ag functions $f:\R_+^d\to D$, i.e.\ of right-continuous functions for which the left limit exists. Hence, $(f(Y_t^n))_{t\ge 0}$ is relatively compact for all $f\in\mathcal{A}$. Then, for given $f\in\mathcal{A}$, there exists a subsequence $(n_k)_{k\in\N}$ such that $f(Y_t^{n_k})_{t\ge 0}$ converges to some limit $(X_t^f)_{t\ge 0}$. 

Set $f_j(x)=\sin(\pi x_j/(2K))$ for $j=1,\ldots,d$; then $f_j\in\mathcal{A}$. There exists a subsequence $(n_k^1)_{k\in\N}$ of $(n_k)_{k\in\N}$ such that $f_1(Y_t^{n_k^1})$ converges to a limit $X_t^{f_1}$. Since $Y_t^{n_k^1}\in[-K,K]$ is bounded for $t\ge 0$ and $f_j^{-1}(y)=(2K/\pi)\sin^{-1}(y)$ for $y\in[-1,1]$, the processes $Y_t^{n_k^1}$ converges to the limit $\bar{X}_t^1:=f_1^{-1}(X_t^{f_1})$. Repeating this argument for the subsequence $(n_k^{j+1})$ of $(n_k^j)$ for $j=1,\ldots,d-1$, we infer that there exists a subsequence $(\bar{n}_k)_{k\in\N}$ such that for $j=1,\ldots,d$,
$$
  f_j(Y_t^{\bar{n}_k})\mbox{ converges to }X_t^{f_j}, \quad
  Y_t^{\bar{n}_k}\mbox{ converges to }\bar{X}_t^j:=f_j^{-1}(X_t^{f_j}).
$$
Clearly, $(M_f^{\bar{n}_k})_k$, $(B_f^{\bar{n}_k})_k$, and $f(Y_t^{\bar{n}_k})_k$ are relatively compact for $f\in\mathcal{A}$. As in \cite[Theorem 7.4.1]{EtKu86}, $(M_f^{n_k})_k$ is uniformly integrable for $f\in\mathcal{A}$. Consequently,
$$
  M_f(t) = f(\bar{X}_t) - f(\bar{X}_0) - \int_0^t\big(
  \sigma^2\Delta f(\bar{X}_s) - \na V(\bar{X}_s)\cdot\na f(\bar{X}_s)
  \big)\dd s
$$
is a martingale. Now, if $(X_t)_{t\ge 0}$ is a solution to the martingale problem for $\L$ with initial distribution $\mu_0$, the uniqueness of this martingale problem implies that $(Y_t^n)_{t\ge 0}$ converges in distribution to $(X_t)_{t\ge 0}$, finishing the proof.
\end{appendix}


\end{document}